\newtheorem{theorem}{Theorem}[section]
\newtheorem{lemma}[theorem]{Lemma}
\newtheorem{proposition}[theorem]{Proposition}
\newtheorem{rhp}{RH Problem}
\numberwithin{equation}{section}
\begin{document}

\begin{frontmatter}
\title{Existence of global solutions to the Fokas-Lenells equation with arbitrary spectral singularities}

\author[inst1]{Yuan Li}
\author[inst2]{Qiaoyuan Cheng}
\author[inst1]{Engui Fan$^{*,}$}

\address[inst1]{ School of Mathematical Sciences and Key Laboratory of Mathematics for Nonlinear Science, Fudan University, Shanghai, 200433, China}
\address[inst2]{School of Mathematics, Shanghai University of Finance and Economics, Shanghai, 200433, China\\
* Corresponding author and e-mail address: faneg@fudan.edu.cn  }

\begin{abstract}
We establish the global existence of solutions to the Fokas-Lenells equation for any initial data in 
a weighted Sobolev space  $H^{3}(\mathbb{R})\cap H^{2,1}(\mathbb{R})$.
This result removes  all spectral restrictions  on  the initial data required in our previous work.
 The proof primarily relies on the inverse scattering transform formulated as new  Riemann-Hilbert problems  and Zhou's $L^{2}$-Sobolev bijectivity theory.
\end{abstract}

\begin{keyword}
   Fokas-Lenells equation  \sep Global solutions \sep Inverse scattering transform \sep Riemann-Hilbert problem  \sep Cauchy projection operator.  

  \textit{Mathematics Subject Classification: }35P25; 35Q15; 35G25; 35A01.

  \end{keyword}
\end{frontmatter}
\tableofcontents

\section{Introduction}
The purpose of this paper is to study the existence of global solutions to the Cauchy problem for the Fokas-Lenells (FL) equation
\begin{align}
&u_{tx}-u-i|u|^{2}u_{x}=0,\quad x\in\mathbb{R},\label{fl}\\
&u(x,t=0)=u_{0}(x),\label{initial}
\end{align}
where the initial data $u_{0}\in H^{3}(\mathbb{R})\cap H^{2,1}(\mathbb{R})$.
  The FL equation (\ref{fl}), as  an integrable generalization of the well-known  nonlinear Schr\"{o}dinger (NLS) equation,   was first  derived by Fokas \cite{Fokas} using the bi-Hamiltonian methods. This relationship is similar to that between the Camassa-Holm equation and the Korteweg-de Vries equation \cite{ch}. The  equation \eqref{fl} is an equivalent and simpler form of the original version, which can be derived through a simple change of variables \cite{Lenells}. When certain higher-order nonlinear effects are taken into account, this FL equation arises as a model for nonlinear pulse propagation in monomode optical fibers \cite{Lenells}.

Lenells and Fokas employed the bi-Hamiltonian structure to derive the first few conservation laws and the Lax pair for the FL equation \cite{FL}. Building on this integrable structure, they solved both the initial-value problem \cite{FL} and the initial-boundary value problem \cite{FLibvp}, and implemented the dressing method \cite{FLdr}. The bright and dark solitons of the FL equation were constructed through the bilinear transformation method by Matsuno \cite{bright,dark}. In the presence of a perturbation, the bright soliton propagation of the FL equation was investigated in \cite{pertu}. For initial data in the Schwartz space and a weighted Sobolev space, the long-time asymptotic behavior of solutions to the FL equation was obtained via the Deift-Zhou method \cite{xuj} and the $\overline{\partial}$-steepest descent method \cite{Chengasy}, respectively.

In recent years, the issue of global well-posedness for completely integrable equations has been a subject of significant research interest \cite{Bah1,Bah2,Ha1,Ha2,Jen1,Jen2,Jenkins,Killip1,Killip2,Liu16,Pel1,Pel2}. Within this context, the inverse scattering technique plays a vital role \cite{Jen1,Jen2,Jenkins,Liu16,Pel1,Pel2}. In 1998, Zhou established the $L^{2}$-Sobolev space bijectivity for the Zakharov-Shabat (ZS) spectral problem \cite{Zhou1998}, which forms the rigorous theoretical foundation for analyzing global well-posedness via the inverse scattering transform method.
For initial data not supporting eigenvalues and spectral singularities,  Pelinovsky and Shimabukuro \cite{Pel2} constructed a global solution in $H^{2}(\mathbb{R})\cap H^{1,1}(\mathbb{R})$ for the derivative NLS equation associated with the Kaup-Newell (KN) spectral problem. They subsequently applied the B\"{a}cklund transformation to extend the results to initial data including eigenvalues \cite{Pel1}.
More recently, Jenkins et al. \cite{Jenkins} established the global well-posedness of the derivative NLS equation for any initial data $u_{0}$
in the stronger space $H^{2}(\mathbb{R})\cap L^{2,2}(\mathbb{R})$. This marks the first breakthrough beyond the long-standing requirement of small initial data norms for the global well-posedness of the derivative NLS equation. Moreover, this achievement clearly highlights the significant advantage of the inverse scattering method in addressing the global well-posedness for the derivative NLS equation.

The  studies on the well-posedness of the FL equation remain scarce.  Prior work was largely confined to that of Fokas and Himonas, who established local well-posedness for its periodic initial-value problem in earlier years  \cite{FLci}. The presence of the mixed space-time derivative term $u_{tx}$ in the FL equation \eqref{fl} makes it extremely difficult to consider the well-posedness problem directly from the equation itself by using energy estimate method. Consequently, a natural idea is to exploit the FL equation's Lax pair structure and employ inverse scattering techniques to investigate its global well-posedness of the FL equation on the line.
However   when applying this method,
the scattering data $a(k)$  associated with given initial data $u_0(x)$   contains  two kinds  of  zeros.  The  zeros of $a(k)$ in $\mathbb{C}\setminus(\mathbb{R}\cup i \mathbb{R})$ and on $\mathbb{R}\cup i \mathbb{R}$ are referred to as    the eigenvalues   and spectral singularities of the spectral problem  \eqref{laxx}, respectively.
Consequently, these zeros  will cause  singularities of the  matrix-valued   function $n(x,k)$    defined  by \eqref{rhba} on the complex plane $\mathbb{C}$.
In recent years, we have   employed the inverse scattering transform  method to establish the global well-posedness of the FL equation in $H^{3}(\mathbb{R})\cap H^{2,1}(\mathbb{R})$ for initial data without eigenvalues and spectral singularities \cite{Chengglo}. Later, through the Darboux transformation, we refined the result to allow eigenvalues while avoiding spectral singularities \cite{Chengsoli}.  In the present work, by combining the inverse scattering transform method \cite{Beals} with Zhou's $L^{2}$-bijectivity theory \cite{Zhou1998}, we remove all spectral restrictions on both
 eigenvalues and spectral singularities, establishing global well-posedness of the FL equation \eqref{fl} for any initial data   $u_0(x)\in H^{3}(\mathbb{R})\cap H^{2,1}(\mathbb{R})$.
  To overcome the  difficulty that both  eigenvalues    and  spectral singularities   bring,  we use the distribution properties of the zeros  of  scattering data $a(k)$ to construct a new piecewise analytic function $N(x,k)$ defined by  \eqref{ahkf}  that effectively removes all eigenvalues  and spectral singularities.

\subsection{Main results}
The main results are stated in the following theorem:
\begin{theorem}\label{thm}
For any initial data $u_{0}\in H^{3}(\mathbb{R})\cap H^{2,1}(\mathbb{R})$, there exists a solution
\begin{equation*}
u\in C\left(\left[-T,T\right];H^{3}(\mathbb{R})\cap H^{2,1}(\mathbb{R})\right)
\end{equation*}
to the Cauchy problem \eqref{fl}--\eqref{initial} for every $T>0$. Moreover, the map
\begin{equation*}
H^{3}(\mathbb{R})\cap H^{2,1}(\mathbb{R})\ni u_{0}\longmapsto u \in C\left([-T,T]; H^{3}(\mathbb{R})\cap H^{2,1}(\mathbb{R})\right)
\end{equation*}
is Lipschitz continuous.
\end{theorem}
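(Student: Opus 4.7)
The plan is to realize the solution map $u_0 \mapsto u(\cdot, t)$ as a composition of three Lipschitz transformations: a direct scattering map from $u_0$ to scattering data, a linear time evolution of that data, and an inverse scattering map recovering $u(x,t)$ from an appropriately reformulated Riemann-Hilbert (RH) problem. The whole argument is carried out on an RH problem whose contour and jump matrix are engineered, via the piecewise analytic function $N(x,k)$ introduced in \eqref{ahkf}, so that neither the eigenvalues nor the spectral singularities of the original scattering coefficient $a(k)$ produce singularities of the RH solution, and bijectivity is then supplied by Zhou's $L^{2}$-Sobolev theory.

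I would begin by studying the Jost solutions of the Lax problem \eqref{laxx} for $u_0 \in H^{3}(\mathbb{R}) \cap H^{2,1}(\mathbb{R})$ and establishing, through Volterra integral equations and moment and derivative bounds, that the reflection coefficient $r(k) = b(k)/a(k)$ together with the discrete norming data attached to the finitely many eigenvalues of $a(k)$ in $\mathbb{C}\setminus(\mathbb{R}\cup i\mathbb{R})$ lies in a suitable weighted Sobolev space, and that the map $u_0 \mapsto (r, \text{discrete data})$ is locally Lipschitz. The symmetries of the FL spectral problem under $k \mapsto \bar k$ and $k \mapsto -k$ force both eigenvalues and spectral singularities to occur in symmetric groupings, and this structural information is essential for the contour surgery below.

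The hard part will be neutralizing the singularities of $n(x,k)$ in \eqref{rhba} produced by the zeros of $a(k)$. For each eigenvalue I would redefine $n(x,k)$ inside a small disc by subtracting the appropriate residue, thereby converting pole conditions into closed jump contours. For each spectral singularity on $\mathbb{R}\cup i\mathbb{R}$ I would indent the real and imaginary axes by small semicircles and choose the piecewise definition of $N(x,k)$ in \eqref{ahkf} so that, on the new contour $\Sigma$, the transformed jump matrix is bounded, continuous across the indentations, and lies in the weighted Sobolev class required by Zhou's bijectivity theorem. The main technical points are verifying the algebraic identities --- Schwartz symmetries, determinant normalization, and the vanishing lemma at contour self-intersections --- which together guarantee that the reformulated RH problem is uniquely solvable and that its solution depends Lipschitz continuously on the new scattering data.

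Once bijectivity is established, I would solve the Beals-Coifman singular integral equation on $\Sigma$ to obtain $N(x,k)$, read off $u(x)$ from the large-$k$ asymptotic expansion of $N(x,k)$, and use the weighted estimates together with the recovered moment and derivative information to show that $u \in H^{3}(\mathbb{R}) \cap H^{2,1}(\mathbb{R})$ with Lipschitz dependence on $(r, \text{discrete data})$. The time-dependent case is then obtained by dressing the reflection coefficient and norming constants with the explicit FL phase, which acts as a unimodular multiplier and therefore preserves the relevant weighted norms uniformly for $t \in [-T,T]$. Composing the three Lipschitz steps yields both the existence and the Lipschitz continuity claims of Theorem~\ref{thm}.
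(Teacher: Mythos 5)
Your overall architecture (direct map $\to$ linear time evolution $\to$ inverse map, with Zhou's $L^2$-Sobolev bijectivity supplying the estimates) matches the paper, but the step you call ``the hard part'' is where your plan genuinely fails. You propose to treat each eigenvalue by residue subtraction inside a small disc and each spectral singularity by indenting $\mathbb{R}\cup i\mathbb{R}$ with small semicircles. This presupposes that the zeros of $a(k)$ form a finite, located, simple collection. For $u_{0}\in H^{3}(\mathbb{R})\cap H^{2,1}(\mathbb{R})$ nothing of the sort is available: on the contour $a$ is only a continuous (Sobolev-class) boundary value of its analytic extension, so its zero set on $\mathbb{R}\cup i\mathbb{R}$ need not be finite or isolated, zeros may be non-simple, and eigenvalues may accumulate at the contour. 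Indentation does not remove a boundary zero of $a$ in any case, since $1/a$ still blows up as the indented contour is approached from inside the analytic region. The paper's essential device, which your proposal is missing, is the construction in \eqref{ahkf}: choose $R_{\infty}$ so large that $a$ has no zeros outside $B(0,R_{\infty})$, choose a cut-off point $x_{0}$ so that the truncated potential $u\chi_{(x_{0},+\infty)}$ has small norm and hence a nonvanishing $a_{0}$, and replace $n(x,k)$ inside the single disc $B(0,R_{\infty})$ by the singularity-free $n_{2}(x,k)$ built from $n_{0}$. All zeros of $a$, of whatever nature and multiplicity, are swallowed at once by the circular jump on $|k|=R_{\infty}$, with no need to locate or count them. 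Solvability is then obtained not by checking a vanishing lemma on your ad hoc contour but by pulling the $z$-plane Beals--Coifman equation back to the $k$-plane (Lemma \ref{munu}), where the jump matrix $R(k)$ of \eqref{v1}--\eqref{v4} does satisfy Zhou's vanishing lemma.

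A second gap concerns reconstruction and regularity. The large-$k$ expansion of $N(x,k)$ recovers only $u_{x}$ (see \eqref{rei}); recovering $u$ itself requires the $k\to 0$ asymptotics via the $t$-part of the Lax pair (see \eqref{rez} and \eqref{recp2}), which your plan omits. Moreover, the scattering data lie only in $H^{1}\cap L^{2,1}$ on the contour, one derivative below the initial data, so a single reconstruction formula combined with Zhou's theory yields only $u_{x}\in H^{1}(\mathbb{R})\cap L^{2,1}(\mathbb{R})$ --- not $u\in H^{3}(\mathbb{R})\cap H^{2,1}(\mathbb{R})$. The paper closes this by introducing a second row-vector RH problem for $H(x,z)$ in \eqref{mah} whose large-$z$ limit reconstructs $u_{xx}$ directly through \eqref{sfs}. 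Without the auxiliary-potential contour surgery and without the two additional reconstruction formulas, your argument as written cannot reach the stated conclusion.
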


Here we remak that although  there exists a deep relationship between the derivative NLS equation and the FL equation, the latter being the first negative flow of the integrable hierarchy of the former \cite{Kundu,Lenells},
 the inverse scattering analysis for the FL equation meets technical  difficulties   compared to that for the derivative NLS equation, where  only one reconstruction formula was  required \cite{Jenkins}.
While we have to construct three distinct reconstruction formulas in this paper.
We elaborate on the necessity of the two additional reconstruction formulas
through the following two aspects:

Firstly, for the FL equation, the limit of $N(x,k)$ as $k\to\infty$ reconstructs only $u_{x}$ (see \eqref{rei}), thus the reconstruction of the original  potential $u$ necessarily requires asymptotic analysis at $k\to0$ (see \eqref{rez}). To estimate the reconstructed potential on the $z$-plane, we introduce the first row-vector-valued function $P(x,z)$  defined in \eqref{map} to  derive   two $z$-plane reconstruction formulas \eqref{recp1} and \eqref{recp2}.

Secondly, for initial data $u_{0}\in H^{3}(\mathbb{R})\cap H^{2,1}(\mathbb{R})$, the direct scattering mapping established in \cite{Chengglo} yields that scattering data $G^{(2)}_{\pm}-I$ belongs to $H^{1}(\partial\widetilde{\Omega}^{\pm})\cap L^{2,1}(\partial\widetilde{\Omega}^{\pm})$, see Proposition \ref{timep}. This implies that the scattering data reside in a space with lower regularity compared to the initial data space. However, under this condition, the reconstruction formula \eqref{recp1} can only guarantee that the recovered potential $u_{x}$ belongs to $H^{1}(\mathbb{R})\cap L^{2,1}(\mathbb{R})$, according to Zhou's $L^{2}$-bijectivity theory \cite{Zhou1998}.  Fortunately, we discover that it is possible to overcome this limitation without enhancing the regularity demands on the initial data, as required in \cite{Jenkins}, by constructing a new reconstruction formula. Consequently, we construct the second row-vector-valued function $H(x,z)$ (see \eqref{mah}) and subsequently derive an additional reconstruction formula \eqref{sfs}, thereby obtaining $u_{xx}\in H^{1}(\mathbb{R})\cap L^{2,1}(\mathbb{R})$.
\subsection{Framework of the proof}
This paper is organized as follows.

In Section \ref{sec2}, we focus on the direct scattering transform for the spectral problem \eqref{laxx}. In Section \ref{sub21}, beginning with the KN spectral problem on the $k$-plane, we introduce the fundamental notations and properties used throughout the paper. In Section \ref{sub22}, a novel piecewise analytic matrix-valued function $N(x,k)$ is introduced to remove the eigenvalues and spectral singularities of $n(x,k)$. Section \ref{sub23}  applies a crucial transformation to convert the original KN spectral problem into the ZS spectral problem on the $z$-plane.

Section \ref{sec3} establishes the Riemann-Hilbert (RH) problems and analyzes their solvability.
In Section \ref{sub31}, a classical matrix RH problem \ref{RH1} on the $k$-plane is constructed.
The objective of Section \ref{sub32} is to construct the first type of vector RH problems on the $z$-plane. Using the transformation \eqref{trans}, we formulate the first vector RH problem \ref{RH2} and obtain the first reconstruction formula \eqref{recp1}. It is verified by Proposition \ref{vuyf} that the matching condition at the self-intersection points $\pm R_{\infty}^{2}$ is satisfied by the jump matrix $G(z)$. Subsequently, we construct the RH problems \ref{RHp1} and \ref{RHp2} to achieve the decay of the jump matrix at self-intersections and unify its triangular structure.
To estimate the reconstructed potential, we establish the second type of vector RH problem \ref{RH3} to derive another necessary reconstruction formula \eqref{sfs} in Section \ref{sub33}.  Section \ref{sub34} addresses the time evolution of the jump matrices, and Section \ref{sub35} provides a rigorous proof of the solvability for the RH problems.

Section \ref{sec4} makes the estimate  of the reconstructed potential. By employing the $t$-Lax pair \eqref{psit}, we recover the original potential $u$ at $z\to0$ in Section \ref{sub41}. The relationship between the RH problems and the corresponding Beals-Coifman solutions is used to derive the final reconstruction formulas \eqref{rrp}--\eqref{rru}. Subsequently, Section \ref{sub42} obtains  the estimates for the reconstructed potential, while the proof of Theorem \ref{thm} is presented in Section \ref{sub43}.

\subsection{Notations}

Here we present some notations used through out this paper.
\begin{enumerate}[label=(\arabic*)]
  \item  A weighted $L^{p}$ space is defined by
         \begin{equation*}
         L^{p,s}(\mathbb{R})=\left\{f\in L^{p}(\mathbb{R}): \langle \cdot\rangle^{s}f\in L^{p}(\mathbb{R})\right\}, \quad \langle \cdot\rangle=\sqrt{1+|\cdot|^{2}}.
         \end{equation*}
  \item A Sobolev space is defined by
         \begin{equation*}
         H^{k}(\mathbb{R})=\left\{f\in L^{2}(\mathbb{R}): \langle \cdot\rangle^{k}\widehat{f}\in L^{2}(\mathbb{R})\right\},\quad k\geq0,
         \end{equation*}
         where $\widehat{f}$ is the Fourier transform of $f$.
  \item A weighted Sobolev space is defined by
         \begin{equation*}
         H^{k,s}(\mathbb{R})=\left\{f\in L^{2}(\mathbb{R}): \langle \cdot\rangle^{s}\partial^{j}f\in L^{2}(\mathbb{R}), \, j=0,1,\cdots,k\right\}.
         \end{equation*}
  \item  The $L^{p}$ matrix norm of the matrix function $U$ are denoted as $\|U\|_{L^{p}}:=\left\| \left|U\right| \right\|_{L^{p}}$, where $|U|=\sqrt{\operatorname{tr}(U^{\dag}U)}$.
  \item Let $\Gamma$ be a self-intersecting contour that can be decomposed as $\Gamma=\Gamma_{1}\cup\cdots\cup\Gamma_{n}$, where each $\Gamma_{j}$ is non-self-intersecting and smooth. The Sobolev space $H^{k}(\Gamma)$ is defined by $f|_{\Gamma_{j}}\in H^{k}(\Gamma_{j})$ for all $j$. In our framework, each $\Gamma_{j}$ is parameterizable over $\mathbb{R}$.
  \item If $\Omega$ is a connected open region and $\partial\Omega$ is the nonsmooth boundary of $\Omega$, then $f\in H^{k}(\partial\Omega)$ means that $f$ is $ H^{k}$ on each smooth piece and $f$ matches from two sides to the order $k-1$ at each nonsmooth point. If $\Omega$ is a disconnected region, then $f\in H^{k}(\partial\Omega)$ implies that $f\in H^{k}(\partial\Omega_{i})$ for every connected component $\Omega_{i}$ of $\Omega$.

\end{enumerate}

\section{Direct scattering transform}\label{sec2}
Building upon the framework established in our previous work \cite{Chengglo}, we analyze the direct scattering transform for the FL equation \eqref{fl}.
\subsection{KN spectral problem on the $k$-plane}\label{sub21}
The FL equation \eqref{fl} admits a Lax pair \cite{Chengglo}
\begin{align}
&\phi_{x}=\frac{i}{2}k^{2}\sigma_{3}\phi+k\,U_{x}\phi,\label{laxx}\\
&\phi_{t}=\frac{i}{2}k^{-2}\sigma_{3}\phi+i\sigma_{3}\left(U^{2}-k^{-1}U\right)\phi,\label{laxt}
\end{align}
where
\begin{equation*}
\sigma_{3}=\begin{pmatrix}1&0\\
0&-1\end{pmatrix}, \quad U=\begin{pmatrix}0&u\\
-\bar{u}&0\end{pmatrix}.
\end{equation*}
Let
$$\psi=\phi\, e^{-\frac{i}{2}(k^{2}x+k^{-2}t)\sigma_{3}}, $$
then the Lax pair \eqref{laxx}--\eqref{laxt}  become
\begin{align}
&\psi_{x}=\frac{i}{2}k^{2}\left[\sigma_{3},\psi\right]+k\,U_{x}\psi,\label{psix} \\
&\psi_{t}=\frac{i}{2}k^{-2}\left[\sigma_{3},\psi\right]+i\sigma_{3}\left(U^{2}-k^{-1}U\right)\psi,\label{psit}
\end{align}
where $[\sigma_{3}, \psi]=\sigma_{3}\psi-\psi\sigma_{3}$  and \eqref{psix} is a KN type spectral problem because of  the multiplication by $k$ in the matrix potential $U_{x}$.

For  direct scattering, we first consider the spectral problem \eqref{psix}, therefore as usual we omit the time variable $t$.
Actually, \eqref{psix} can be expressed in the form of an integral equation
\begin{equation*}
\psi(x,k)=e^{\frac{i}{2}k^{2}x\widehat{\sigma}_{3}}\delta(k)+\int_{\beta}^{x}e^{\frac{i}{2}k^{2}(x-y)\widehat{\sigma}_{3}}k\,U_{y}(y)\psi(y,k)dy,
\end{equation*}
where $e^{\frac{i}{2}k^{2}x\widehat{\sigma}_{3}}\delta(k)=e^{\frac{i}{2}k^{2}x\sigma_{3}}\delta(k) e^{-\frac{i}{2}k^{2}x\sigma_{3}}$, $\delta(k)$ is a matrix function independent of $x$,  and the parameter $\beta$ can be chosen differently for different entries of the matrix. Especially, the solution $\psi^{\pm}(x,k)$ to equation \eqref{psix} with the asymptotic behavior
$\psi^{\pm}(x,k)\rightarrow I, x\rightarrow\pm\infty$ admits the following representation
\begin{equation}\label{vo}
\psi^{\pm}(x,k)=I+\int_{\pm\infty}^{x}e^{\frac{i}{2}k^{2}(x-y)\widehat{\sigma}_{3}}k\,U_{y}(y)\psi^{\pm}(y,k)dy.
\end{equation}
Clearly, $\psi^{\pm}(x,0)=I$. Let $\psi^{\pm}=\left[\psi_{1}^{\pm},\psi_{2}^{\pm}\right]$, where the subscripts ``1'' and ``2'' denote the first and second columns of $\psi^{\pm}$,  respectively.

 For every $x\in\mathbb{R}$, it can be shown that  $\psi_{1}^{+}(x,k)$ and $\psi_{2}^{-}(x,k)$ are analytic for $k\in D^{+}$, while $\psi_{1}^{-}(x,k)$ and $\psi_{2}^{+}(x,k)$ are analytic for $k\in D^{-}$, here
\begin{equation}
D^{+}=\left\{k\in\mathbb{C}:\text{Im} k^{2}>0\right\},\quad
D^{-}=\left\{k\in\mathbb{C}:\text{Im} k^{2}<0\right\}.
\end{equation}
Moreover, $\psi^{\pm}$ satisfy the symmetry relations
 \begin{equation}\label{sym}
\psi^{\pm}(k)=\sigma_{2}\overline{\psi^{\pm}(\overline{k})}\sigma_{2},\quad\psi^{\pm}(k)=\sigma_{3}\psi^{\pm}(-k)\sigma_{3}.
\end{equation}

The scattering matrix $S(k)$ associated with the spectral problem \eqref{psix} is given by
\begin{equation}\label{sca}
\psi^{-}(x,k)=\psi^{+}(x,k)e^{\frac{i}{2}k^{2}x\widehat{\sigma}_{3}}S(k),\quad k\in \mathbb{R}\cup i\mathbb{R},
\end{equation}
where
\begin{equation}
S(k)=\begin{pmatrix}a(k)&b(k)\\c(k)&d(k)\end{pmatrix},\quad \det S(k)=1.
\end{equation}
From \eqref{vo} and \eqref{sym}, we have
\begin{equation}\label{vos}
S(k)=I+\int_{\mathbb{R}}e^{-\frac{i}{2}k^{2}y\widehat{\sigma}_{3}}k\,U_{y}(y)\psi^{-}(y,k)dy
\end{equation}
and
\begin{equation}\label{syms}
S(k)=\sigma_{2}\overline{S(\overline{k})}\sigma_{2},\quad S(k)=\sigma_{3}S(-k)\sigma_{3}.
\end{equation}

For scattering data $a(k)$ and $d(k)$, they can be analytically extended to the $D^{-}$ and $D^{+}$ (see Figure \ref{f1}), respectively. The symmetry
relations \eqref{syms} imply that
\begin{equation}
d(k)=\overline{a(\overline{k})},\quad c(k)=-\overline{b(\overline{k})}.
\end{equation}
Moreover, $a(k)$ and $d(k)$ are even functions in $k$, $b(k)$ and $c(k)$ are odd functions in $k$. In particular, $a(0)=d(0)=1$, $b(0)=c(0)=0$.

\begin{figure}
\begin{center}
\begin{tikzpicture}
\filldraw[color=yellow!10](0,0) rectangle (3,3);
\filldraw[color=yellow!10](0,0) rectangle (-3,-3);
\draw [ ](-3,0)--(3,0)  node[right, scale=1] {$\mathbb{R}$};
\draw [ ](0,-3)--(0,3)  node[above, scale=1] {$i\mathbb{R}$};
\draw[fill,black] (0,0) circle [radius=0.03];
\draw [-latex](1.5,0)--(1.7,0);
\draw [-latex](-1.4,0)--(-1.6,0);
\draw [-latex](0, 1.6)--(0,1.5);
\draw [-latex](0, -1.6)--(0,-1.5);
\node at (0.2, -0.2 )  { $0$};
 \node at (1.5, 1.5)  {$D^+$};
 \node at (-1.5, -1.5 )  {$D^+$};
\node at (1.5, -1.5 )  {$D^-$};
 \node at (-1.5, 1.5 )  {$D^-$};
\end{tikzpicture}
\end{center}
\caption{Jump contour $\Gamma_{0}$ and analytic domains $D^{\pm}$ for $n(x,k)$.}
\label{f1}
 \end{figure}
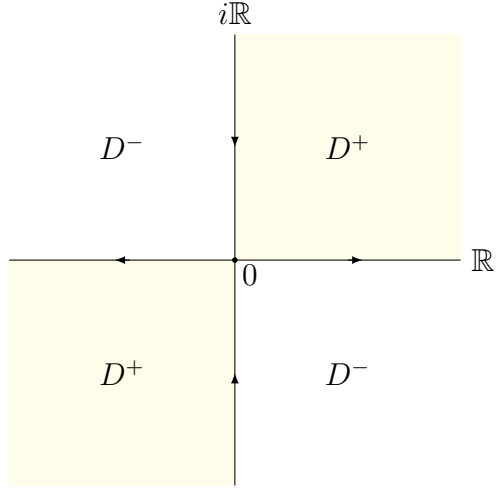
\subsection{Removing eigenvalues and spectral singularities}\label{sub22}
Using the analytic properties of $\psi^{\pm}(x,k)$, $a(k)$ and $d(k)$, we construct a piecewise analytic matrix-valued function
\begin{equation}\label{rhba}
n(x,k)=\left\{
\begin{aligned}
&\left[\psi_{1}^{+}(x,k),\frac{\psi_{2}^{-}(x,k)}{d(k)}\right],\quad k\in D^{+},\\
&\left[\frac{\psi_{1}^{-}(x,k)}{a(k)},\psi_{2}^{+}(x,k)\right],\quad k\in D^{-}.
\end{aligned}
\right.
\end{equation}
From the scattering relation \eqref{sca}, this function satisfies the jump condition
\begin{equation}
n_{+}(x,k)=n_{-}(x,k)e^{\frac{i}{2}k^{2}x\widehat{\sigma}_{3}}\,v(k), \quad k \in \Gamma_{0},
\end{equation}
with
\begin{equation*}
v(k)=\begin{pmatrix}1&-\overline{r(\overline{k})}\\-r(k)&1+r(k)\overline{r(\overline{k})}\end{pmatrix},
\end{equation*}
where the reflection coefficient $r(k)$ is defined by
\begin{equation}
r(k)=\frac{c(k)}{a(k)}.
\end{equation}
Here, the jump contour $\Gamma_{0}=\left\{k\in\mathbb{C}:\text{Im} k^{2}=0\right\}$ and the analytic domains $D^{\pm}$ are characterized in Figure \ref{f1}. However, the scattering data $a(k)$ and $d(k)$ may possess zeros in their respective analytic domains, which results in $n(x,k)$ possessing singularities on the complex plane $\mathbb{C}$.

Proceeding from the properties of the scattering data $a(k)$ obtained in \cite{Chengglo}, we implement two analytical steps: (i) As $k\rightarrow\infty$, the limit of $a(k)$ exists and is non-zero, so we can choose a large enough $R_{\infty}$ such that $a(k)$ has no zeros in $\mathbb{C}\setminus B(0,R_{\infty})$, with $B(0,R_{\infty})=\{k\in\mathbb{C}:|k|<R_{\infty}\}$.
(ii) Under the small norm condition, the scattering data $a(k)$ is never zero, and thus we can select an appropriate $x_{0}\in\mathbb{R}$ such that
\begin{equation*}
\sup_{k\in B(0,R_{\infty})}\left\|kU_{x}\chi_{(x_{0},+\infty)}\right\|_{L^{1}}\ll1.
\end{equation*}
By replacing the potential $u$ in \eqref{psix} with the cut-off potential $u_{x_{0}}=u\chi_{(x_{0},+\infty)}$, we can construct a matrix-valued function $n_{0}(x,k)$ analogous to $n(x,k)$
\begin{equation}
n_{0}(x,k)=\left\{
\begin{aligned}
&\left[\psi_{01}^{+}(x,k),\frac{\psi_{02}^{-}(x,k)}{d_{0}(k)}\right],\quad k\in D^{+},\\
&\left[\frac{\psi_{01}^{-}(x,k)}{a_{0}(k)},\psi_{02}^{+}(x,k)\right],\quad k\in D^{-},
\end{aligned}
\right.
\end{equation}
where the subscript ``$0$'' denote the eigenfunctions and scattering data associated with $u_{x_0}$. Here, $n_{0}(x,k)$ has no singularities on the complex plane.

It follows form \eqref{vo} and \eqref{vos} that
\begin{equation*}
\left[\psi_{1}^{+}(x,k), \frac{\psi_{2}^{-}(x,k)}{d(k)}\right]=I+\int_{\beta}^{x}e^{\frac{i}{2}k^{2}(x-y)\widehat{\sigma}_{3}}k\,U_{y}(y)\left[\psi_{1}^{+}(y,k),\frac{\psi_{2}^{-}(y,k)}{d(k)}\right]dy,
\end{equation*}
where $\beta=-\infty$ for the (1,2) entry and $\beta=+\infty$ for the (1,1), (2,1) and (2,2) entries, and
\begin{equation*}
\left[\frac{\psi_{1}^{-}(x,k)}{a(k)}, \psi_{2}^{+}(x,k)\right]=I+\int_{\beta}^{x}e^{\frac{i}{2}k^{2}(x-y)\widehat{\sigma}_{3}}k\,U_{y}(y)\left[\frac{\psi_{1}^{-}(y,k)}{a(k)}, \psi_{2}^{+}(y,k)\right]dy,
\end{equation*}
where $\beta=-\infty$ for the (2,1) entry and $\beta=+\infty$ for the (1,1), (1,2) and (2,2) entries. Thus, the $n(x,k)$ defined by \eqref{rhba} satisfies \eqref{psix} for $k\in\mathbb{C}\setminus B(0,R_{\infty})$.  Define
\begin{equation}
n_{2}(x,k):=n_{1}(x,k)\,e^{\frac{i}{2}k^{2}(x-x_{0})\widehat{\sigma}_{3}}\,n_{0}(x_{0},k),
\end{equation}
where $n_{1}(x,k)$ is a solution of the integral equation
\begin{equation*}
n_{1}(x,k)=I+\int_{x_{0}}^{x}e^{\frac{i}{2}k^{2}(x-y)\widehat{\sigma}_{3}}k\,U_{y}(y)n_{1}(y,k)dy.
\end{equation*}
Then, we can verify that $n_{2}(x,k)$ also satisfies \eqref{psix}.  From \eqref{vo} and \eqref{vos}, we obtain $n(x,k)\rightarrow I$ as $x\rightarrow +\infty$.
Moreover, $n_{2}(x,k)\rightarrow I$ as $x\rightarrow +\infty$ since $n_{2}(x,k)=n_{0}(x,k)$ for all $x\geq x_{0}$.

\begin{figure}
\begin{center}
\begin{tikzpicture}
 \draw [fill=pink,ultra thick,color=yellow!10] rectangle (3,3);
 \draw [fill=blue,ultra thick,color=yellow!10] (0,0) rectangle (-3,-3);
\filldraw[white](0,0)-- (1.8,0) arc (0:90:1.8);
\filldraw[white](0,0)-- (-1.8,0) arc (180:270:1.8);
\filldraw[color=yellow!10](0,0)-- (0,-1.8) arc (270:360:1.8);
\filldraw[color=yellow!10](0,0)-- (0,1.8) arc (90:180:1.8);
\draw [ ](-3,0)--(3,0)  node[right, scale=1] {$\mathbb{R}$};
\draw [ ](0,-3)--(0,3)  node[above, scale=1] {$i\mathbb{R}$};
\draw[fill,black] (1.8,0) circle [radius=0.03];
\draw[fill,black] (-1.8,0) circle [radius=0.03];
\draw[fill,black] (0,1.8) circle [radius=0.03];
\draw[fill,black] (0,-1.8) circle [radius=0.03];
\draw[fill,black] (0,0) circle [radius=0.03];
\draw [-latex](1,0)--(0.9,0);
\draw [-latex](-1,0)--(-0.9,0);
\draw [-latex](0,0.9)--(0,1);
\draw [-latex](0,-0.9)--(0,-1);
\draw [-latex](2.5,0)--(2.6,0);
\draw [-latex](-2.5,0)--(-2.6,0);
\draw [-latex](0, 2.5)--(0,2.4);
\draw [-latex](0, -2.5)--(0,-2.4);
\draw [-latex](1.35,1.18)--(1.4,1.13);
\draw [-latex](1.35,-1.18)--(1.4,-1.13);
\draw [-latex](-1.35,-1.18)--(-1.4,-1.13);
\draw [-latex](-1.35,1.18)--(-1.4,1.13);
\node [left] at (-1.8,-0.4)  {$-R_{\infty}$};
\node [right] at (1.8,-0.4)  {$R_{\infty}$};
\node [below] at (0.6,-1.8)  {$-iR_{\infty}$};
\node [above] at (0.6,1.8)  {$iR_{\infty}$};
\node at (1.8,1.8)  {$\widetilde{D}^{+}$};
\node at (1.8,-1.8)  {$\widetilde{D}^{-}$};
\node at (-1.8,-1.8)  {$\widetilde{D}^{+}$};
\node at (-1.8,1.8)  {$\widetilde{D}^{-}$};
\node at (0.8,0.8)  {$\widetilde{D}^{-}$};
\node at (0.8,-0.8)  {$\widetilde{D}^{+}$};
\node at (-0.8,0.8)  {$\widetilde{D}^{+}$};
\node at (-0.8,-0.8)  {$\widetilde{D}^{-}$};
\draw(0,0)circle(1.8cm);
\end {tikzpicture}
\end{center}
\caption{Jump contour $\Gamma$ and analytic domains $\widetilde{D}^{\pm}$ for $N(x,k)$.}
\label{f2}
\end{figure}

Combining the above analysis, we introduce a new piecewise analytic matrix-valued function
\begin{equation}\label{ahkf}
N(x,k)=\left\{
\begin{aligned}
&n(x,k),\quad k\in\mathbb{C} \setminus\left(B(0,R_{\infty})\cup\Gamma_{0}\cup\Gamma_{\infty}\right),\\
&n_{2}(x,k),\quad k\in B(0,R_{\infty})\setminus\Gamma_{0},
\end{aligned}
\right.
\end{equation}
where $\Gamma_{\infty}=\left\{k\in\mathbb{C}:|k|=R_{\infty}\right\}$. This function satisfies \eqref{psix} and tends to the identity matrix $I$ as  $x\rightarrow +\infty$.  The jump condition for $N(x,k)$ is given by
\begin{equation}\label{juv}
N_{+}(x,k)=N_{-}(x,k)\,e^{\frac{i}{2}k^{2}x\widehat{\sigma}_{3}}\,R(k),\quad k\in\Gamma,
\end{equation}
where the jump contour $\Gamma=\Gamma_{0}\cup\Gamma_{\infty}$ and the analytic domains $\widetilde{D}^{\pm}$ are characterized in Figure \ref{f2}. More specifically,
for $k\in\Gamma_{1}=\Gamma_{0}\cap(\mathbb{C}\setminus B(0,R_{\infty}))$,
  \begin{equation}\label{v1}
  R_{1}(k)=\begin{pmatrix}1&-\overline{r(\overline{k})}\\-r(k)&1+r(k)\overline{r(\overline{k})}\end{pmatrix}.
  \end{equation}
For $k\in\Gamma_{2}=\Gamma_{\infty}\cap D^{+}$, it is
sufficient to consider $x=x_{0}$, then we have
  \begin{equation}\label{v2}
R_{2}(k)=e^{-\frac{i}{2}k^{2}x_{0}\widehat{\sigma}_{3}}\left(n_{0}(x_{0},k)\right)^{-1}m(x_{0},k)=\begin{pmatrix}1&\frac{\psi_{12}^{-}(x_{0},k)e^{-ik^{2}x_{0}}}{d(k)d_{0}(k)}\\0&1\end{pmatrix}.
\end{equation}
For $k\in\Gamma_{3}=\Gamma_{0}\cap B(0,R_{\infty})$,
  \begin{equation}\label{v3}
  R_{3}(k)=\begin{pmatrix}1+r_{0}(k)\overline{r_{0}(\overline{k})}&\overline{r_{0}(\overline{k})}\\r_{0}(k)&1\end{pmatrix}.
  \end{equation}
For $k\in\Gamma_{4}=\Gamma_{\infty}\cap D^{-}$,
\begin{equation}\label{v4}
R_{4}(k)=e^{-\frac{i}{2}k^{2}x_{0}\widehat{\sigma}_{3}}m^{-1}(x_{0},k)n_{0}(x_{0},k)=\begin{pmatrix}1&0\\-\frac{\psi_{21}^{-}(x_{0},k)e^{ik^{2}x_{0}}}{a(k)a_{0}(k)}&1\end{pmatrix}.
\end{equation}

To address the subsequent inverse scattering estimate for $x\leq0$, we define an auxiliary matrix-valued function
\begin{equation}\label{au1}
\widetilde{n}(x,k)=\left\{
\begin{aligned}
&\left[\frac{\psi_{1}^{+}(x,k)}{d(k)},\psi_{2}^{-}(x,k)\right],\quad k\in D^{+},\\
&\left[\psi_{1}^{-}(x,k),\frac{\psi_{2}^{+}(x,k)}{a(k)}\right],\quad k\in D^{-},
\end{aligned}
\right.
\end{equation}
which constructs a left-normalized $\widetilde{N}(x,k)$ approaching $I$ as $x\rightarrow -\infty$. Actually, the relationship between $n(x,k)$ and $\widetilde{n}(x,k)$ can be established by an auxiliary matrix $\gamma(k)$
\begin{equation}\label{au2}
\widetilde{n}(x,k)=n(x,k)\gamma(k),
\end{equation}
where
\begin{equation*}
\gamma(k)=\left\{
\begin{aligned}
&\begin{pmatrix}\frac{1}{d(k)}&0\\0&d(k)\end{pmatrix},\quad k\in D^{+}, \\
&\begin{pmatrix}a(k)&0\\0&\frac{1}{a(k)}\end{pmatrix},\quad k\in D^{-}.
\end{aligned}
\right.
\end{equation*}
Denoting by $\widetilde{v}(k)$ the jump matrix associated with $\widetilde{n}(x,k)$, we have the relation $\widetilde{v}(k)=\gamma_{-}^{-1}(k)v(k)\gamma_{+}(k)$. This allows us to restrict attention to $N(x,k)$.

\subsection{ZS spectral problem on the $z$-plane}\label{sub23}
Let us recall the following transformation introduced in \cite{Chengglo}
\begin{equation}\label{trans}
\Psi(x,z)=A(x,k)\psi(x,k)B(k),
\end{equation}
with $z=k^{2}$ and
\begin{equation*}
A(x,k)=\begin{pmatrix}1&0\\-\bar{u}_{x}&-ik\end{pmatrix},\quad B(k)=\begin{pmatrix}1&0\\0&ik^{-1}\end{pmatrix}.
\end{equation*}
The KN spectral problem \eqref{psix} is thereby transformed into a ZS type spectral problem
\begin{equation}\label{Ps}
\Psi_{x}=\frac{i}{2}z\left[\sigma_{3},\Psi\right]+\widetilde{U}\Psi,
\end{equation}
where
\begin{equation*}
\widetilde{U}=\begin{pmatrix}i|u_{x}|^{2}&iu_{x}\\-\bar{u}_{xx}-i\bar{u}_{x}|u_{x}|^{2}&-i|u_{x}|^{2}\end{pmatrix}.
\end{equation*}
In particularly, for $z\in\mathbb{C}\setminus\{0\}$, we define
\begin{equation}
\begin{aligned}
\Psi^{\pm}(x,z):=&\,A(x,k)\psi^{\pm}(x,k)B(k)\\
=&\begin{pmatrix}\psi_{11}^{\pm}&ik^{-1}\psi_{12}^{\pm}\\
-\bar{u}_{x}\psi_{11}^{\pm}-ik\psi_{21}^{\pm}&-ik^{-1}\bar{u}_{x}\psi_{12}^{\pm}+\psi_{22}^{\pm}
\end{pmatrix}.
\end{aligned}
\end{equation}
When $z=0$, define $\Psi^{\pm}(x,0):=\begin{pmatrix}1&0\\-\bar{u}_{x}&1\end{pmatrix}$. Denote $\Psi^{\pm}=\left[\Psi_{1}^{\pm},\Psi_{2}^{\pm}\right]$, then $\Psi_{1}^{+}(x,z)$ and $\Psi_{2}^{-}(x,z)$ are analytic for $z\in \mathbb{C}^{+}$, while $\Psi_{1}^{-}(x,z)$ and $\Psi_{2}^{+}(x,z)$ are analytic for $z\in \mathbb{C}^{-}$. From the scattering relation \eqref{sca} on the $k$-plane, we have
\begin{equation}\label{rela}
\Psi^{-}(x,z)=\Psi^{+}(x,z)e^{\frac{i}{2}zx\operatorname{ad}\sigma_{3}}S(z),\quad z\in\mathbb{R},
\end{equation}
where
\begin{equation}
S(z)=\begin{pmatrix}a(z)&\breve{b}(z)\\ \breve{c}(z)&d(z)\end{pmatrix}=\begin{pmatrix}a(k)&ik^{-1}b(k)\\ -ikc(k)&d(k)\end{pmatrix}.
\end{equation}
Here, $\breve{b}(z)$ has no singularity at $z=0$, see \cite{Chengglo}.

Transformation \eqref{trans} not only converts the KN spectral problem into a ZS spectral problem that is more advantageous for scattering analysis, but also significantly simplifies the verification of self-intersection matching conditions. It is worth noting that the matrix function $B(k)$ in this transformation depends exclusively on the variable $k$, enabling us to flexibly construct the necessary transformations for the RH problems. More regularity properties of the eigenfunctions $\Psi^{\pm}(x,z)$ and scattering data defined on the $z$-plane can be found in our previous work \cite{Chengglo}.

\section{Inverse scattering transform}\label{sec3}
\subsection{The classical matrix RH problem}\label{sub31}

Let us consider the reconstruction relationship between the matrix function $N(x,k)$ constructed in \eqref{ahkf} and the potential $u$.  Substituting the following expansion
\begin{equation}\label{exm}
N(x,k)=N_{\infty}(x)+\frac{N_{-1}(x)}{k}+\frac{N_{-2}(x)}{k^{2}}+O(k^{-3}),\quad k\rightarrow\infty
\end{equation}
into \eqref{psix}, we have
\begin{equation}\label{asy}
N(x,k)\rightarrow N_{\infty}(x)=e^{-i\int_{x}^{\infty}|u_{y}(y)|^{2}dy\sigma_{3}},\quad k\rightarrow\infty,
\end{equation}
and
\begin{equation}\label{rei}
u_{x}(x)=-iN_{-1}^{(12)}(x)e^{-i\int_{x}^{\infty}|N_{-1}^{(12)}(y)|^{2}dy},
\end{equation}
where $N_{-1}^{(12)}(x)=\lim\limits_{k\rightarrow\infty}\left(kN(x,k)\right)_{12}$. Here, ``12'' denotes the (1,2)-entry of the matrix function.

Let $m(x,k)=e^{i\int_{x}^{\infty}|u_{y}(y)|^{2}dy\sigma_{3}}\psi(x,k)$, then from \eqref{psix} we have
\begin{equation}\label{Phix}
m_{x}=\frac{i}{2}k^{2}\left[\sigma_{3},m\right]+\left(kU_{1}+U_{2}\right)m,
\end{equation}
where
\begin{equation*}
U_{1}=e^{i\int_{x}^{\infty}|u_{y}(y)|^{2}dy\widehat{\sigma}_{3}}U_{x},\quad U_{2}=-i|u_{x}|^{2}\sigma_{3}.
\end{equation*}
In order to normalize $N(x,k)$, we define a new matrix function
\begin{equation}\label{mm}
M(x,k):=N_{\infty}^{-1}(x)N(x,k),
\end{equation}
then $M(x,k)$ satisfies \eqref{Phix} and
\begin{equation}
M(x,k)\rightarrow I, \quad k\rightarrow \infty.
\end{equation}
From \eqref{juv}, the jump condition for $M(x,k)$ is given by
\begin{equation}\label{matn}
M_{+}(x,k)=M_{-}(x,k)\,e^{\frac{i}{2}k^{2}x\widehat{\sigma}_{3}}\,R(k),\quad k\in\Gamma.
\end{equation}
Therefore, we establish the following RH problem.
\begin{rhp}\label{RH1}
Find a matrix-valued function $M(x,k)$ satisfying:
\begin{itemize}
  \item Analyticity: $M(x,\cdot)$ is analytic in $\mathbb{C}\setminus\Gamma$.
  \item Jump condition:
  On the oriented contour $\Gamma$ (see Figure \ref{f2}), $M(x,k)$ satisfies the jump condition
      \begin{equation}
      M_{+}(x,k)=M_{-}(x,k)R_{x}(k),\quad k\in\Gamma,
      \end{equation}
      where $R_{x}(k)=e^{\frac{i}{2}k^{2}x\widehat{\sigma}_{3}}R(k)$ and $R(k)$ is given by \eqref{v1}--\eqref{v4}.
  \item Normalization: $M(x,k)\rightarrow I$ as $k\rightarrow\infty$.
\end{itemize}
\end{rhp}
It follows from \eqref{rei} and \eqref{mm} that
\begin{equation}\label{recn1}
u_{x}(x)=-iM_{-1}^{(12)}(x)e^{-2i\int_{x}^{\infty}|M_{-1}^{(12)}(y)|^{2}dy},
\end{equation}
where
\begin{equation}\label{m-1}
M_{-1}^{(12)}(x)=\lim\limits_{k\rightarrow\infty}\left(kM(x,k)\right)_{12}.
\end{equation}
For a given oriented contour $\Gamma$ and any function $f\in L^{2}(\Gamma)$, the Cauchy operator is defined by \cite{Zhou1989b,Trogdon}
\begin{equation}
\mathcal{C}_{\Gamma}f(k)=\frac{1}{2\pi i}\int_{\Gamma}\frac{f(\xi)}{\xi-k}d\xi, \quad k\in\mathbb{C}\setminus\Gamma.
\end{equation}
The function $\mathcal{C}_{\Gamma}f$ is analytic off the contour $\Gamma$. The projection operators are given by
\begin{equation}
\mathcal{C}_{\Gamma}^{\pm}f(k)=\lim_{\substack{k'\to k \\ k'\in \widetilde{D}^{\pm}}}\frac{1}{2\pi i}\int_{\Gamma}\frac{f(\xi)}{\xi-k'}d\xi, \quad k\in \Gamma,
\end{equation}
where the limits are taken nontangentially. The operator $\mathcal{C}_{\Gamma}^{\pm}$ is bounded from $L^{2}(\Gamma)$ to $L^{2}(\Gamma)$. By classical inverse scattering theory, the RH problem \ref{RH1} is equivalent to the Beals-Coifman equation:
\begin{equation}
\mu(x,k)=I+\left(\mathcal{C}_{R_{x}}\mu\right)(x,k),
\end{equation}
where $\mu=M_{+}R_{x+}^{-1}=M_{-}R_{x-}^{-1}$ with $R_{x}=R_{x-}^{-1}R_{x+}$ and
\begin{equation}\label{opv}
\mathcal{C}_{R_{x}}\mu=\mathcal{C}_{\Gamma}^{+}\mu(I-R_{x-})+\mathcal{C}_{\Gamma}^{-}\mu(R_{x+}-I).
\end{equation}
A key feature of the RH Problem \ref{RH1} on the $k$-plane is that its jump matrix $R(k)$ satisfies Zhou's vanishing lemma \cite{Zhou1989b}, as demonstrated in the following lemma.
\begin{lemma}\label{vanish}
The jump matrix $R(k)$ given by \eqref{v1}--\eqref{v4} satisfies the following properties:
\begin{enumerate}[label=(\roman*)]
  \item For $k\in\mathbb{R}$, $R(k)+R^{\dag}(k)$ is positive definite.
  \item For $k \in \Gamma \setminus \mathbb{R}$, $R(k)=R^{\dag}(\overline{k})$.
\end{enumerate}
\end{lemma}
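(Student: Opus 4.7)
The plan is to verify both statements by a direct piecewise computation on each smooth component of $\Gamma$. The only tools needed are the three symmetries already recorded in the paper: the parity relations $r(-k)=-r(k)$ and $r_0(-k)=-r_0(k)$ (inherited from $S(k)=\sigma_3 S(-k)\sigma_3$ in \eqref{syms} and the oddness of $b,c$), the conjugation relation $d(k)=\overline{a(\bar k)}$ (together with its $0$-subscript analogue), and the eigenfunction symmetry $\psi^{\pm}(k)=\sigma_{2}\overline{\psi^{\pm}(\bar k)}\sigma_{2}$ from \eqref{sym}, which yields entry-wise identities such as $\psi_{12}^{-}(k)=-\overline{\psi_{21}^{-}(\bar k)}$.

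For part (i), I would treat $\mathbb{R}\cap\Gamma_{1}$ and $\mathbb{R}\cap\Gamma_{3}$ separately. On each set $\bar k = k$, so $R_{1}$ in \eqref{v1} and $R_{3}$ in \eqref{v3} simplify to the Hermitian matrices
\begin{equation*}
\begin{pmatrix}1&-\overline{r(k)}\\-r(k)&1+|r(k)|^{2}\end{pmatrix},\qquad
\begin{pmatrix}1+|r_{0}(k)|^{2}&\overline{r_{0}(k)}\\r_{0}(k)&1\end{pmatrix},
\end{equation*}
respectively; hence $R+R^{\dag}=2R$. A direct computation gives trace $2+|r|^{2}$ (resp.\ $2+|r_{0}|^{2}$) and determinant $1$, so both eigenvalues are strictly positive, establishing positive definiteness.

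For part (ii), I would argue contour by contour. On $\Gamma_{1}\cap i\mathbb{R}$ one has $\bar k=-k$; inserting this into \eqref{v1} and using $r(-k)=-r(k)$ shows that $R_{1}(k)$ reduces to $\bigl(\begin{smallmatrix}1&\overline{r(k)}\\-r(k)&1-|r(k)|^{2}\end{smallmatrix}\bigr)$, which is precisely $R_{1}^{\dag}(-k)=R_{1}^{\dag}(\bar k)$; the same idea with $r_{0}$ disposes of $\Gamma_{3}\cap i\mathbb{R}$. On the circular pieces, $k\in\Gamma_{2}$ implies $\bar k\in\Gamma_{4}$, so the identity to verify is $R_{2}(k)=R_{4}^{\dag}(\bar k)$. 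Applying $\dag$ to \eqref{v4} at $\bar k$ and then using $\overline{\bar k^{2}}=k^{2}$, $\overline{a(\bar k)}=d(k)$, $\overline{a_{0}(\bar k)}=d_{0}(k)$, and $\overline{\psi_{21}^{-}(x_{0},\bar k)}=-\psi_{12}^{-}(x_{0},k)$ (from \eqref{sym}) collapses the off-diagonal entry exactly to that of $R_{2}(k)$ in \eqref{v2}. The case $k\in\Gamma_{4}$ is obtained by taking the conjugate transpose of this identity.

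The only real subtlety is the bookkeeping: each component of $\Gamma$ must be paired with the correct conjugate component ($\mathbb{R}\cap\Gamma_{1}$ with itself, $i\mathbb{R}\cap\Gamma_{1}$ with itself, $\Gamma_{2}$ with $\Gamma_{4}$), and the transformation of the factor $\psi^{-}(x_{0},\bar k)$ appearing in $R_{2},R_{4}$ must be tracked through the eigenfunction symmetry of \eqref{sym}. Compatibility of the resulting formulas at the self-intersection points $\pm R_{\infty},\pm i R_{\infty}$ is not required for the present lemma, since it is proved separately in Proposition \ref{vuyf}.
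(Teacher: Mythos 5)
Your proposal is correct and follows essentially the same route as the paper: part (i) by observing that $R_1$ and $R_3$ are already Hermitian on $\mathbb{R}$ and checking positivity (the paper merely asserts positive definiteness, whereas you justify it via trace and determinant), and part (ii) by a direct entrywise verification using the symmetries $r(-k)=-r(k)$, $d(k)=\overline{a(\overline{k})}$ and $\psi_{21}^{-}(k)=-\overline{\psi_{12}^{-}(\overline{k})}$. The only difference is that you spell out the contour pairings ($i\mathbb{R}$-pieces with themselves, $\Gamma_2$ with $\Gamma_4$) that the paper leaves implicit behind ``property (ii) is immediate.''
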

\begin{proof}
This lemma follows from direct calculation. Clearly,
\begin{equation*}
R_{1}(k)+R_{1}^{\dag}(k)=2\begin{pmatrix}1&-\overline{r(k)}\\-r(k)&1+|r(k)|^{2}\end{pmatrix},\quad k\in\mathbb{R}\cap\Gamma_{1},
\end{equation*}
and
\begin{equation*}
R_{3}(k)+R_{3}^{\dag}(k)=2\begin{pmatrix}1+|r_{0}(k)|^{2}&\overline{r_{0}(k)}\, \\r_{0}(k)&1\end{pmatrix},\quad k\in\mathbb{R}\cap\Gamma_{3},
\end{equation*}
is positive definite. Applying the symmetry constraints $d(k)=\overline{a(\overline{k})}$ and $\psi_{21}^{-}(k)=-\overline{\psi_{12}^{-}(\overline{k})}$, property (ii) is immediate.
\end{proof}

\begin{figure}
\begin{center}
\begin{tikzpicture}
\draw [fill=pink,ultra thick,color=yellow!10](-3.5,0) rectangle (3.5,3);
 \draw [fill=pink,ultra thick,color=white!10] (-3.5,0) rectangle (3.5,-2.2);
\filldraw[color=white!10](0,0)-- (2,0) arc (0:180:2);
\filldraw[color=yellow!10](0,0)-- (2,0) arc (0:-180:2);
\draw(0,0)circle(2cm);
\draw[fill,black] (2,0) circle [radius=0.03];
\draw[fill,black] (-2,0) circle [radius=0.03];
\draw [ ](-3.5,0)--(3.5,0)  node[right, scale=1] {$\mathbb{R}$};
\draw [-latex](2.95,0)--(3,0);
\draw [-latex](-2.65,0)--(-2.6,0);
\draw [-latex](0,0)--(-0.1,0);
\draw [-latex]  (0.15,2)--(0.2,2);
\draw [-latex]  (0.15,-2)--(0.2,-2);
\node [right] at (-2,-0.4)  {$-R_{\infty}^{2}$};
\node [right] at (2,-0.4)  {$R_{\infty}^{2}$};
\node at (2.3,1.7)  {$\Omega^{+}$};
\node at (2.3,-1.7)  {$\Omega^{-}$};
\node at (0,1)  {$\Omega^{-}$};
\node at (0,-1)  {$\Omega^{+}$};
\end{tikzpicture}
\end{center}
\caption{ Jump contour $\Lambda$ and analytic domains $\Omega^{\pm}$ for $P(x,z)$.}
\label{f3}
\end{figure}
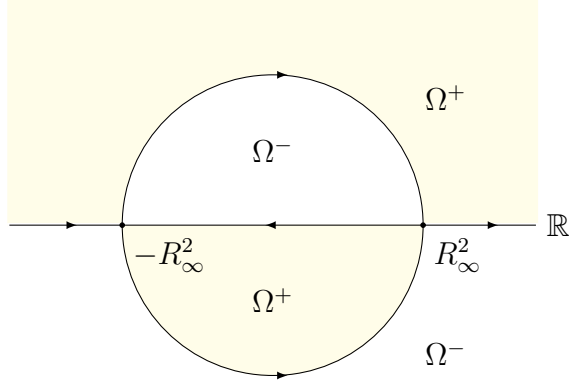

\subsection{Constructing Type I vector RH problems}\label{sub32}
Based on the transformation \eqref{trans}, we now consider a piecewise analytic row-vector-valued function on the $z$-plane
\begin{equation}\label{map}
P(x,z):=\left[A(x,k)M(x,k)B(k)\right]^{(R_{1})},
\end{equation}
where the superscript ``$(R_{1})$'' denotes the first row of the matrix. Note that
\begin{equation}\label{que}
A(x,k)M(x,k)B(k)\rightarrow\begin{pmatrix}1&0\\
-\bar{u}_{x}+\bar{u}_{x}e^{-2i\int_{x}^{\infty}|u_{y}|^{2}dy}&1\end{pmatrix},\quad z\rightarrow\infty,
\end{equation}
that is,
\begin{equation}
P(x,z)\rightarrow (1,0),\quad z\rightarrow\infty,
\end{equation}
which is why we adopt the row-vector-valued function. From \eqref{matn}, we can derive the jump matrix of $P(x,z)$
\begin{equation}
P_{+}(x,z)=P_{-}(x,z)e^{\frac{i}{2}zx\widehat{\sigma}_{3}}G(z),\quad z\in\Lambda,
\end{equation}
where
\begin{equation}\label{rg}
G(z)=B^{-1}(k)R(k)B(k).
\end{equation}
The jump contour
$\Lambda=\mathbb{R}\cup\Lambda_{\infty}$  with  $\Lambda_{\infty}=\{z\in\mathbb{C}:|z|=R_{\infty}^{2}\}$ and the positive and negative regions of $\Lambda$ are shown in Figure \ref{f3}. By using \eqref{v1}--\eqref{v4}, we are able to obtain the detailed expression for $G(z)$. For $z\in\Lambda_{1}=(-\infty,-R_{\infty}^{2})\cap(R_{\infty}^{2},+\infty)$,
       \begin{equation}\label{j1}
       G_{1}(z)=\begin{pmatrix} 1&r_{1}(z)\\-r_{2}(z)&1-r_{1}(z)r_{2}(z)
       \end{pmatrix},
       \end{equation}
 where
      \begin{equation}
      r_{1}(z)=-ik^{-1}\overline{r(\overline{k})}\quad \text{and} \quad r_{2}(z)=-ikr(k).
      \end{equation}
Clearly,  $r_{2}(z)=-z\overline{r_{1}(z)}$.
For $\Lambda_{2}=\Lambda_{\infty}\cap\mathbb{C}^{+}$,
       \begin{equation}\label{j2}
       G_{2}(z)=\begin{pmatrix}1&\frac{\Psi_{12}^{-}(x_{0},z)e^{-izx_{0}}}{d(z)d_{0}(z)}\\0&1
       \end{pmatrix}.
       \end{equation}
For $\Lambda_{3}=(R_{\infty}^{2},-R_{\infty}^{2})$,
       \begin{equation}\label{j3}
       G_{3}(z)=\begin{pmatrix}1-r_{01}(z)r_{02}(z)&-r_{01}(z)\\r_{02}(z)&1
       \end{pmatrix},
       \end{equation}
       where $r_{01}(z)=-ik^{-1}\overline{r_{0}(\overline{k})}$ and $r_{02}(z)=-ikr_{0}(k)$.
For $\Lambda_{4}=\Lambda_{\infty}\cap \mathbb{C}^{-}$,
       \begin{equation}\label{j4}
       G_{4}(z)=\begin{pmatrix}1&0\\ \frac{z\overline{\Psi_{12}^{-}(x_{0},\overline{z})}e^{izx_{0}}}{a(z)a_{0}(z)}&1
        \end{pmatrix}.
       \end{equation}
By \eqref{map}, we rewrite the reconstruction formulas \eqref{recn1} as follows
\begin{equation}\label{recp1}
u_{x}(x)=-p(x)e^{-2i\int_{x}^{\infty}|p(y)|^{2}dy},
\end{equation}
where
\begin{equation}\label{px}
p(x)=\lim\limits_{z\rightarrow\infty}\left(zP\left(x,z\right)\right)_{12}
\end{equation}
and the subscript ``12'' denotes the second component of the row vector. Building upon these results, we construct the Type I row vector RH problem as follows:
\begin{rhp}\label{RH2}
Find a row-vector-valued function $P(x,z)$ satisfying:
\begin{itemize}
  \item Analyticity: $P(x,\cdot)$ is analytic in $\mathbb{C}\setminus\Lambda$.
  \item Jump condition: $P(x,z)$ has continuous boundary values $P_{\pm}(x,z)$ as $z'\rightarrow z$ from $\Omega^{\pm}$ to $\Lambda$ (see Figure \ref{f3}) and
      \begin{equation}
      P_{+}(x,z)=P_{-}(x,z)G_{x}(z),\quad z\in\Lambda,
      \end{equation}
      where $G_{x}(z)=e^{\frac{i}{2}zx\widehat{\sigma}_{3}}G(z)$ and $G(z)$ is given by \eqref{j1}--\eqref{j4}.
  \item Normalization: $P(x,z)\rightarrow (1,0)$ as $z\rightarrow\infty$.
\end{itemize}
\end{rhp}
We present the Beals-Coifman integral equation equivalent to the RH Problem \ref{RH2} as follows:
\begin{equation}\label{nu}
\omega_{p}(x,z)=(1,0)+\left(\mathcal{C}_{G_{x}}\omega_{p}\right)(x,z),
\end{equation}
where $\omega_{p}=P_{+}G_{x+}^{-1}=P_{-}G_{x-}^{-1}$ with $G_{x}=G_{x-}^{-1}G_{x+}$  and
\begin{equation}
\mathcal{C}_{G_{x}}\omega_{p}=\mathcal{C}_{\Lambda}^{+}\omega_{p}(I-G_{x-})+\mathcal{C}_{\Lambda}^{-}\omega_{p}(G_{x+}-I).
\end{equation}
Unlike conventional configurations where scattering data resides on the real axis \cite{Chengglo}, our framework defines these data on a self-intersecting contour $\Lambda$ (see Figure \ref{f3}). This distinction necessitates explicit analysis of the regularity properties of jump matrix $G(z)$ on $\Lambda$ for inverse scattering estimation.
\begin{proposition}\label{vuyf}
Let $u\in H^{3}(\mathbb{R})\cap H^{2,1}(\mathbb{R})$. Then the jump matrix $G(z)$ given by \eqref{j1}--\eqref{j4} admits a triangular factorization
\begin{equation}
G(z)=G_{-}^{-1}(z)G_{+}(z), \quad z\in\Lambda,
\end{equation}
where
$G_{\pm}-I$ and $G_{\pm}^{-1}-I\in H^{1}(\partial\Omega^{\pm})\cap L^{2,1}(\partial\Omega^{\pm})$.
\end{proposition}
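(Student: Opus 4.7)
The plan is to construct the factorization $G=G_{-}^{-1}G_{+}$ explicitly on each of the four arcs $\Lambda_{1},\ldots,\Lambda_{4}$ of $\Lambda$, then verify that the pieces have the requisite $H^{1}\cap L^{2,1}$ regularity on each arc and glue continuously across the self-intersection points $z=\pm R_{\infty}^{2}$. The gluing condition at the corners is the main obstacle.

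Each jump matrix in \eqref{j1}--\eqref{j4} is a product of two unipotent triangular factors. On the compact semicircles $\Lambda_{2},\Lambda_{4}$ the matrix is already unipotent triangular, so I take the trivial split with one factor equal to $I$: $G_{2-}=I$, $G_{2+}=G_{2}$ on $\Lambda_{2}$ and $G_{4-}=G_{4}^{-1}$, $G_{4+}=I$ on $\Lambda_{4}$. On $\Lambda_{1}$ and $\Lambda_{3}$ I use the standard triangular decompositions
\[
G_{1}=\begin{pmatrix}1 & 0\\ -r_{2} & 1\end{pmatrix}\begin{pmatrix}1 & r_{1}\\ 0 & 1\end{pmatrix}=G_{1-}^{-1}G_{1+},\qquad G_{3}=\begin{pmatrix}1 & -r_{01}\\ 0 & 1\end{pmatrix}\begin{pmatrix}1 & 0\\ r_{02} & 1\end{pmatrix}=G_{3-}^{-1}G_{3+}.
\]
Every factor is unipotent triangular, so $G_{j\pm}^{-1}-I=-(G_{j\pm}-I)$ inherits the same sparsity and satisfies the same regularity estimates as $G_{j\pm}-I$; this automatically disposes of the parallel claim for the inverses.

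For the regularity on each arc, the compact semicircles $\Lambda_{2},\Lambda_{4}$ are trivial: the choice of $R_{\infty}$ large guarantees that $d d_{0}$ and $a a_{0}$ do not vanish on $\Lambda_{\infty}$, and the relevant entries of $\psi^{-}(x_{0},\cdot)$ are analytic across $\Gamma_{\infty}\cap D^{\pm}$, so $G_{2+}-I$ and $G_{4-}-I$ are smooth and bounded on these bounded arcs and belong trivially to $H^{1}\cap L^{2,1}$. On $\Lambda_{1}$ and $\Lambda_{3}$, the direct scattering estimates of \cite{Chengglo} give, for $u_{0}\in H^{3}\cap H^{2,1}$, the Sobolev and weighted regularity of $r(k)$ and $r_{0}(k)$ on the $k$-axis needed to conclude that $r_{1},r_{2}\in H^{1}(\Lambda_{1})\cap L^{2,1}(\Lambda_{1})$ and $r_{01},r_{02}\in H^{1}(\Lambda_{3})\cap L^{2,1}(\Lambda_{3})$ after transport through $z=k^{2}$, whose Jacobian $2k$ is bounded and bounded away from zero on each arc. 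The apparent singularity of $r_{0j}$ at $z=0$ is absorbed by the oddness $r_{0}(0)=0$ combined with the $k^{\pm 1}$ prefactors, which renders $r_{0j}$ smooth through the origin.

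The main obstacle is the corner matching at $z=\pm R_{\infty}^{2}$. Near $R_{\infty}^{2}$, the boundary $\partial\Omega^{+}$ splits into two local branches: an outer branch formed by $\Lambda_{1}|_{z>R_{\infty}^{2}}$ and $\Lambda_{2}$, and an inner branch formed by $\Lambda_{3}$ and $\Lambda_{4}$. For $G_{\pm}$ to belong to $H^{1}$ across these corners, the paper's definition of $H^{1}$ on a piecewise smooth contour requires continuity at the corners, which translates into identities such as
\[
r_{1}(R_{\infty}^{2})=\frac{\Psi_{12}^{-}(x_{0},R_{\infty}^{2})\, e^{-iR_{\infty}^{2}x_{0}}}{d(R_{\infty}^{2})\, d_{0}(R_{\infty}^{2})},
\]
together with the three companion identities at $\pm R_{\infty}^{2}$ for $G_{-}$ and for the inner branch. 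These I would prove by tracing the definitions back to the eigenfunctions, using the fact that $x_{0}$ was chosen in Section \ref{sub22} so that $u\equiv u_{x_{0}}$ on $[x_{0},\infty)$ and hence $\psi^{+}(x,k)=\psi_{0}^{+}(x,k)$ for $x\geq x_{0}$. Applying the scattering relation \eqref{sca} at $x=x_{0}$ together with the symmetries \eqref{syms} and the conjugation $G=B^{-1}RB$ of \eqref{rg} converts the $k$-plane reflection expressions at $k=\pm R_{\infty}$ into the required $x_{0}$-dependent quotients. The underlying reason this works is the no-monodromy condition that $N(x,k)$ is single-valued piecewise analytic, which forces the cyclic product of the jump matrices at each self-intersection to be the identity and therefore ensures the algebraic consistency of the matching. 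Combining the arc-wise $H^{1}\cap L^{2,1}$ bounds with continuous matching at the four corners yields the claim.
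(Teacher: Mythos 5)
Your factorizations on $\Lambda_{1}$ and $\Lambda_{3}$ coincide with the paper's, but the trivial splits on the semicircles ($G_{2-}=I$, $G_{4+}=I$) break exactly the corner matching that the proposition is about, and the matching identity you propose to prove is false. By the paper's convention (Notation item (6)), $G_{-}-I\in H^{1}(\partial\Omega^{-})$ requires $G_{-}$ to be continuous at $\pm R_{\infty}^{2}$ along the boundary of the inner-upper component of $\Omega^{-}$, which is $\Lambda_{2}\cup\Lambda_{3}$; your choice gives $G_{2-}(\pm R_{\infty}^{2})=I$ while $G_{3-}(\pm R_{\infty}^{2})=\bigl(\begin{smallmatrix}1&r_{01}\\0&1\end{smallmatrix}\bigr)$, so matching fails unless $r_{01}(\pm R_{\infty}^{2})=0$, for which there is no reason. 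Likewise $G_{4+}=I$ cannot match $G_{3+}(\pm R_{\infty}^{2})=\bigl(\begin{smallmatrix}1&0\\r_{02}&1\end{smallmatrix}\bigr)$ on the inner-lower component of $\Omega^{+}$, and $G_{2+}=G_{2}$ cannot match $G_{1+}$. Indeed, the identity you intend to establish, $r_{1}(R_{\infty}^{2})=e^{-iR_{\infty}^{2}x_{0}}\Psi_{12}^{-}(x_{0},R_{\infty}^{2})/\bigl(d(R_{\infty}^{2})d_{0}(R_{\infty}^{2})\bigr)$, is not what the scattering relation gives: evaluating \eqref{rela} at $x=x_{0}$ yields $\Psi_{12}^{-}=\Psi_{11}^{+}\breve{b}\,e^{izx_{0}}+\Psi_{12}^{+}d$, hence $e^{-izx_{0}}\Psi_{12}^{-}/(dd_{0})=-r_{01}+r_{1}$ at $z=\pm R_{\infty}^{2}$; the $(1,2)$ entry of $G_{2}$ at the corner is $r_{1}-r_{01}$, not $r_{1}$.

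That computation is precisely the paper's proof, and it dictates the correct, nontrivial splits on the circle: $G_{2}=G_{2-}^{-1}G_{2+}$ with \emph{both} factors upper triangular, $G_{2-}^{-1}$ carrying the piece with corner value $-r_{01}$ (so that $G_{2-}$ matches $G_{3-}$) and $G_{2+}$ carrying the piece with corner value $r_{1}$ (so that it matches $G_{1+}$); symmetrically $G_{4}=G_{4-}^{-1}G_{4+}$ with both factors lower triangular and corner values $-r_{2}$ and $r_{02}$. Your appeal to the no-monodromy/cyclic condition $G_{1}G_{2}^{-1}G_{3}G_{4}^{-1}\to I$ identifies the right consistency constraint, but that constraint only guarantees that \emph{some} assignment of triangular factors matches at the corners; it does not validate an arbitrary split, and the trivial one is the assignment it rules out. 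The remainder of your argument (arc-wise $H^{1}\cap L^{2,1}$ regularity from the direct-scattering estimates of \cite{Chengglo}, unipotency handling the inverses) is sound once the splits on $\Lambda_{2}$ and $\Lambda_{4}$ are corrected.
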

\begin{proof}
From our prior results in \cite{Chengglo} (Propositions 3.5, 3.9 and 3.14), we use parameterization to derive
\begin{equation*}
G-I,\ G^{-1}-I\in H^{1}(\Lambda)\cap L^{2,1}(\Lambda).
\end{equation*}
Using the expression for $G_{2}$ in \eqref{j2} and the foundational relation \eqref{rela}, we obtain that $G_2(z)$ admits the following decomposition at the critical points  $z=\pm R_{\infty}^{2}$
\begin{equation*}
\begin{aligned}
G_{2}(\pm R_{\infty}^{2})
&=\begin{pmatrix}1&e^{-i(\pm R_{\infty}^{2})x_{0}}\frac{\Psi_{12}^{-}(x_{0},\pm R_{\infty}^{2})}{d(\pm R_{\infty}^{2})d_{0}(\pm R_{\infty}^{2})}\\0&1\end{pmatrix}\\
&=\begin{pmatrix}1&\frac{\Psi_{11}^{+}(x_{0},\pm R_{\infty}^{2})\breve{b}(\pm R_{\infty}^{2})}{d(\pm R_{\infty}^{2})d_{0}(\pm R_{\infty}^{2})}\\0&1\end{pmatrix}\begin{pmatrix}1&e^{-i(\pm R_{\infty}^{2})x_{0}}\frac{\Psi_{12}^{+}(x_{0},\pm R_{\infty}^{2})}{d_{0}(\pm R_{\infty}^{2})}\\0&1\end{pmatrix}\\
&=\begin{pmatrix}1&-r_{01}(\pm R_{\infty}^{2})\\0&1\end{pmatrix}\begin{pmatrix}1&r_{1}(\pm R_{\infty}^{2})\\0&1\end{pmatrix}.
\end{aligned}
\end{equation*}
 Similarly, the matrix $G_4$ satisfies the decomposition
\begin{equation*}
\begin{aligned}
G_{4}(\pm R_{\infty}^{2})
&=\begin{pmatrix}1&0\\e^{i(\pm R_{\infty}^{2})x_{0}}\frac{\pm R_{\infty}^{2}\overline{\Psi_{12}^{-}(x_{0},\pm R_{\infty}^{2})}}{a(\pm R_{\infty}^{2})a_{0}(\pm R_{\infty}^{2})}&1\end{pmatrix}\\
&=\begin{pmatrix}1&0\\ \frac{\pm R_{\infty}^{2}\overline{\Psi_{11}^{+}(x_{0},\pm R_{\infty}^{2})}\,\overline{\breve{b}(\pm R_{\infty}^{2})}}{\overline{d(\pm R_{\infty}^{2})}\,\overline{d_{0}(\pm R_{\infty}^{2})}}&1\end{pmatrix}\begin{pmatrix}1&0\\ e^{i(\pm R_{\infty}^{2})x_{0}} \frac{\pm R_{\infty}^{2}\overline{\Psi_{12}^{+}(x_{0},\pm R_{\infty}^{2})}}{\overline{d_{0}(\pm R_{\infty}^{2})}}&1\end{pmatrix}\\
&=\begin{pmatrix}1&0\\-r_{2}(\pm R_{\infty}^{2})&1\end{pmatrix}\begin{pmatrix}1&0\\r_{02}(\pm R_{\infty}^{2})&1\end{pmatrix}.
\end{aligned}
\end{equation*}
Combining \eqref{j1}, \eqref{j3} and the above decompositions, we obtain
\begin{equation*}
G_{1}^{-1}(\pm R_{\infty}^{2})G_{4}(\pm R_{\infty}^{2})=G_{2}^{-1}(\pm R_{\infty}^{2})G_{3}(\pm R_{\infty}^{2}),
\end{equation*}
which implies
\begin{equation*}
G_{1}G_{2}^{-1}G_{3}G_{4}^{-1}=I+o(1), \quad \text{as} \quad k\rightarrow +R_{\infty}^{2}
\end{equation*}
and
\begin{equation*}
G_{3}^{-1}G_{2}G_{1}^{-1}G_{4}=I+o(1), \quad \text{as} \quad k\rightarrow -R_{\infty}^{2}.
\end{equation*}
Applying \cite{Zhou1999} and Theorem 2.56 in \cite{Trogdon}, we conclude that the jump matrix $G(z)$ defined on $\Lambda$ satisfies matching conditions, thereby proving the proposition. Here, it is important to emphasize that $G_{1+}$, $G_{2\pm}$ and $G_{3-}$ are upper triangular, while $G_{1-}$, $G_{3+}$ and $G_{4\pm}$ are lower triangular.
\end{proof}

Although $G_{\pm}(z)-I$ satisfies the matching condition at the self-intersection points $\pm R_{\infty}^{2}$, the lack of decay behavior compels us to transform $P(x,z)$ for advancing inverse scattering estimates. By \cite{Zhou1998,LiY}, there exists a matrix function $\eta\in \mathcal{A}(\mathbb{C}\setminus\Lambda)$ satisfying the following properties for $l=1$:
\begin{enumerate}[label=(\roman*)]
  \item $\eta_{\pm}\in \mathcal{R}(\partial\Omega^{\pm})$ and $\eta_{\pm}-I=O(z^{-2})$ as $z\rightarrow \infty$.
  \item $\eta_{\pm}$ shares the triangular structure of $G_{\pm}$.
  \item $\eta_{\pm}(z)=G_{\pm}(z)+o\left((z-a)^{l-1}\right)$ for $a=\pm R_{\infty}^{2}$.
\end{enumerate}
Here, $\mathcal{A}(\mathbb{C}\setminus\Lambda)$ is the analytic function space, and $\mathcal{R}(\partial\Omega^{\pm})$ contains functions with rational restrictions on boundary components.  For $x\geq0$,  let us define a new function
\begin{equation}
P^{(1)}(x,z):=P(x,z)e^{\frac{i}{2}zx\widehat{\sigma}_{3}}\eta^{-1}(z),
\end{equation}
then $P^{(1)}(x,z)$ satisfies the following RH problem.
\begin{rhp}\label{RHp1}
Find a row-vector-valued function $P^{(1)}(x,z)$ satisfying:
\begin{itemize}
  \item Analyticity: $P^{(1)}(x,\cdot)$ is analytic in $\mathbb{C}\setminus\Lambda$.
  \item Jump condition: $P^{(1)}(x,z)$ has continuous boundary values $P^{(1)}_{\pm}(x,z)$ as $z'\rightarrow z$ from $\Omega^{\pm}$ to $\Lambda$ (see Figure \ref{f3}) and
      \begin{equation}
      P^{(1)}_{+}(x,z)=P^{(1)}_{-}(x,z)G^{(1)}_{x}(z),\quad z\in\Lambda,
      \end{equation}
      where $G^{(1)}_{x}(z)=e^{\frac{i}{2}zx\widehat{\sigma}_{3}}G^{(1)}(z)$ with
      \begin{equation}
      G^{(1)}(z)=\eta_{-}(z)G(z)\eta_{+}^{-1}(z).
      \end{equation}
  \item Normalization: $P^{(1)}(x,z)\rightarrow (1,0)$ as $z\rightarrow\infty$.
\end{itemize}
\end{rhp}
Let $\omega_{p}^{(1)}$ denote the solution of the Beals-Coifman integral equation associated with RH problem \ref{RHp1}, then it can be verified that $\omega_{p}^{(1)}=\omega_{p}$. Decomposing $G^{(1)}$ via $G^{(1)}=(G^{(1)}_{-})^{-1}G^{(1)}_{+}$, where $G^{(1)}_{-}=G_{-}\eta_{-}^{-1}$ and $G^{(1)}_{+}=G_{+}\eta_{+}^{-1}$. The properties of $\eta_{\pm}$ imply that $G^{(1)}_{\pm}-I$ belong to $H^{1}(\partial\Omega^{\pm})\cap L^{2,1}(\partial\Omega^{\pm})$ and vanish at $\pm R_{\infty}^{2}$. For $1\leq j\leq4$, the upper and lower triangular properties of the matrix function $G^{(1)}_{j\pm}$ are identical to those of $G_{j\pm}$.

\begin{figure}
\begin{center}
\begin{tikzpicture}
\draw [fill=pink,ultra thick,color=yellow!10](-3.5,0) rectangle (3.5,3);
 \draw [fill=pink,ultra thick,color=white!10] (-3.5,0) rectangle (3.5,-2.2);
\filldraw[color=white!10](0,0)-- (2,0) arc (0:180:2);
\filldraw[color=yellow!10](0,0)-- (2,0) arc (0:-180:2);
\filldraw [color=yellow!10] (-2,0) -- plot [domain=-2:2,smooth] (\x,{sqrt(-\x*\x/4+1)}) -- (2,0) -- (0,0);
\filldraw [color=white!10] (-2,0) --(2,0) plot [domain=-2:2,smooth] (\x,{-sqrt(-\x*\x/4+1)});
\draw(0,0)circle(2cm);
\draw[rotate around={0:(0,0)}] (0,0) ellipse (2 and 1);
\draw[fill,black] (2,0) circle [radius=0.03];
\draw[fill,black] (-2,0) circle [radius=0.03];
\draw [ ](-3.5,0)--(3.5,0)  node[right, scale=1] {$\mathbb{R}$};
\draw [-latex](2.95,0)--(3,0);
\draw [-latex](-2.65,0)--(-2.6,0);
\draw [-latex](0,0)--(0.1,0);
\draw [-latex]  (0.15,2)--(0.2,2);
\draw [-latex]  (0.15,-2)--(0.2,-2);
\draw [-latex]  (-0.1,1)--(-0.15,1);
\draw [-latex]  (-0.1,-1)--(-0.15,-1);
\node at (2.5, 1.6 )  {$\widetilde{\Omega}^{+}$};
\node at (2.5, -1.6 )  {$\widetilde{\Omega}^{-}$};
\node at (-0.5, 0.5 )  {$\widetilde{\Omega}^{+}$};
\node at (0.5, -0.5 )  {$\widetilde{\Omega}^{-}$};
\node at (0.5, 1.5 )  {$\widetilde{\Omega}^{-}$};
\node at (-0.5, -1.5 )  {$\widetilde{\Omega}^{+}$};
\node [right] at (-2,-0.3)  {$-R_{\infty}^{2}$};
\node [right] at (2,-0.3)  {$R_{\infty}^{2}$};
\end{tikzpicture}
\end{center}
\caption{Jump contour $\widetilde{\Lambda}$ and analytic domains $\widetilde{\Omega}^{\pm}$ for $P^{(2)}(x,z)$.}
\label{f4}
 \end{figure}
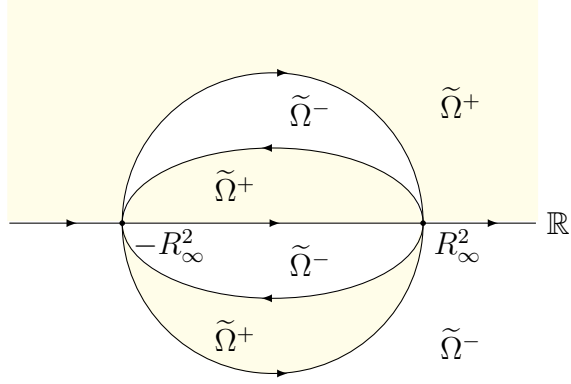

To apply the Cauchy projection operator on $\mathbb{R}$ and unify the triangular properties of $G^{(1)}_{\pm}$, we introduce a new jump contour $\widetilde{\Lambda}$ (see Figure \ref{f4}). The contour $\widetilde{\Lambda}$ contains two extra elliptic arcs not present in $\Lambda$: $\widetilde{\Lambda}_{5}$ in the upper half-plane and $\widetilde{\Lambda}_{6}$ in the lower half-plane. It is specially noted that \(\widetilde{\Lambda}_{3}=\left(-R_{\infty}^{2},R_{\infty}^{2}\right)\), and it is oriented in the opposite direction to $\Lambda_{3}$. For the remaining components, we denote $\widetilde{\Lambda}_{j}=\Lambda_{j}$ with $j=1,2,4$.  Define $G^{(2)}(z)$ on $\widetilde{\Lambda}$ by:
For $j=1,2,4$,
\begin{equation}\label{g21}
G_{j}^{(2)}(z)=G_{j}^{(1)}(z),\quad z\in\widetilde{\Lambda}_{j}.
\end{equation}
For $j=3$,
\begin{equation}\label{g22}
G_{3}^{(2)}(z)=(G_{3}^{(1)}(z))^{-1},\quad z\in\widetilde{\Lambda}_{3}.
\end{equation}
For $j=5,6$,
\begin{equation}\label{g23}
G_{j}^{(2)}(z)=I,\quad z\in\widetilde{\Lambda}_{j}.
\end{equation}
This function admits the decomposition $G^{(2)}=(G^{(2)}_{-})^{-1}G^{(2)}_{+}$ with
\begin{equation}
G_{2-}^{(2)}(z)=G_{4+}^{(2)}(z)=I, \quad G_{3\pm}^{(2)}(z)=G_{3\mp}^{(1)}(z).
\end{equation}
This leads to the following RH problem:
\begin{rhp}\label{RHp2}
Find a row-vector-valued function $P^{(2)}(x,z)$ satisfying:
\begin{itemize}
  \item Analyticity: $P^{(2)}(x,\cdot)$ is analytic in $\mathbb{C}\setminus\widetilde{\Lambda}$.
  \item Jump condition: $P^{(2)}(x,z)$ has continuous boundary values $P^{(2)}_{\pm}(x,z)$ as $z'\rightarrow z$ from $\widetilde{\Omega}^{\pm}$ to $\widetilde{\Lambda}$ (see Figure \ref{f4}) and
      \begin{equation}
      P^{(2)}_{+}(x,z)=P^{(2)}_{-}(x,z)G^{(2)}_{x}(z),\quad z\in\widetilde{\Lambda},
      \end{equation}
      where $G^{(2)}_{x}(z)=e^{\frac{i}{2}zx\widehat{\sigma}_{3}}G^{(2)}(z)$ and $G^{(2)}(z)$  is given by \eqref{g21}--\eqref{g23}.
  \item Normalization: $P^{(2)}(x,z)\rightarrow (1,0)$ as $z\rightarrow\infty$.
\end{itemize}
\end{rhp}

The Beals-Coifman integral equation for RH problem \ref{RHp2} is given by
\begin{equation}
\omega_{p}^{(2)}(x,z)=(1,0)+(\mathcal{C}_{G_{x}^{(2)}}\omega_{p}^{(2)})(x,z),
\end{equation}
where
\begin{equation}
\mathcal{C}_{G_{x}^{(2)}}\omega_{p}^{(2)}=\mathcal{C}_{\widetilde{\Lambda}}^{+}\omega_{p}^{(2)}(I-G_{x-}^{(2)})+\mathcal{C}_{\widetilde{\Lambda}}^{-}\omega_{p}^{(2)}(G_{x+}^{(2)}-I).
\end{equation}
Furthermore, the solution $\omega_{p}^{(2)}$  satisfies the equality $\omega_{p}^{(2)}(z)=\omega_{p}^{(1)}(z)=\omega_{p}(z)$ for $z\in\widetilde{\Lambda}_{j}, 1\leq j\leq4$. $G^{(2)}_{\pm}$ retains the properties of  $G^{(1)}_{\pm}$, that is,  $G^{(2)}_{\pm}-I$ belong to $H^{1}(\partial\widetilde{\Omega}^{\pm})\cap L^{2,1}(\partial\widetilde{\Omega}^{\pm})$ and vanish at $\pm R_{\infty}^{2}$. Additionally, $\forall 1\leq j\leq6$, $G^{(2)}_{j-}$  is strictly lower-triangular and $G^{(2)}_{j+}$  is strictly upper-triangular.  Therefore, we will use $G^{(2)}(z)$ and the reconstruction formula \eqref{recp1} to estimate $u_{x}$.

\subsection{Constructing Type II vector RH problems}\label{sub33}
Within the inverse scattering framework, the reconstruction formula \eqref{recp1} play a central role in potential estimation.
However, according to Zhou's $L^{2}$-bijectivity theory \cite{Zhou1998}, the reconstruction formula \eqref{recp1} and the condition $G^{(2)}_{\pm}-I\in H^{1}(\partial\widetilde{\Omega}^{\pm})\cap L^{2,1}(\partial\widetilde{\Omega}^{\pm})$ yield only $u_{x}\in H^{1}(\mathbb{R})\cap L^{2,1}(\mathbb{R})$. To achieve the enhanced regularity $u\in H^{3}(\mathbb{R})\cap H^{2,1}(\mathbb{R})$, supplementary reconstruction formulas must be developed.

We now introduce another row-vector-valued function $H(x,z)$, defined by
\begin{equation}\label{mah}
H(x,z):=\left[N_{\infty}^{-1}(x)A(x,k)N(x,k)B(k)\right]^{(R_{2})},
\end{equation}
where the superscript ``$(R_{2})$'' denotes the second row of the matrix. Distinct from \eqref{map}, \eqref{mah} performs transformation to the $z$-plane prior to normalization. This improved method overcomes the limitations in \eqref{que}, ensuring the asymptotic behavior
\begin{equation}
 H(x,z)\rightarrow (0,1),\quad z\rightarrow\infty.
 \end{equation}
Furthermore, it establishes a novel reconstruction formula:
\begin{equation}\label{sfs}
h(x)=i\,\partial_{x}\left(\bar{u}_{x}e^{-i\int_{x}^{\infty}|u_{y}|^{2}dy}\right)e^{-i\int_{x}^{\infty}|u_{y}|^{2}dy}=\lim\limits_{z\rightarrow\infty}\left(zH\left(x,z\right)\right)_{11},
\end{equation}
where the subscript ``11'' denotes the first component of the row vector. The validity of this reconstruction formula is guaranteed by a key equality established in Proposition 3.6 of our previous work \cite{Chengglo}:
\begin{equation}\label{asjk}
i\,\partial_{x}\left(\bar{u}_{x}e^{-i\int_{x}^{\infty}|u_{y}|^{2}dy}\right)=\lim_{z\rightarrow\infty}z\Psi_{21}^{+}(x,z).
\end{equation}
It is apparent from the \eqref{juv} that
\begin{equation}
H_{+}(x,z)=H_{-}(x,z)e^{\frac{i}{2}zx\widehat{\sigma}_{3}}G(z),\quad z\in\Lambda,
\end{equation}
where $G(z)$ are given by \eqref{j1}--\eqref{j4}. Therefore, we construct the Type II row vector RH problem:
\begin{rhp}\label{RH3}
Find a row-vector-valued function $H(x,z)$ satisfying:
\begin{itemize}
  \item Analyticity: $H(x,\cdot)$ is analytic in $\mathbb{C}\setminus\Lambda$.
  \item Jump condition:
  $H(x,z)$ has continuous boundary values $H_{\pm}(x,z)$ as $z'\rightarrow z$ from $\Omega^{\pm}$ to $\Lambda$ (see Figure \ref{f3}) and
      \begin{equation}
      H_{+}(x,z)=H_{-}(x,z)G_{x}(z),\quad z\in\Lambda,
      \end{equation}
      where $G_{x}(z)=e^{\frac{i}{2}zx\widehat{\sigma}_{3}}G(z)$ and $G(z)$ is given by \eqref{j1}--\eqref{j4}.
  \item Normalization: $H(x,z)\rightarrow (0,1)$ as $z\rightarrow\infty$.
\end{itemize}
\end{rhp}
The Beals-Coifman integral equation associated with RH Problem \ref{RH3} is as follows:
\begin{equation}\label{abh}
\omega_{h}(x,z)=(0,1)+\left(\mathcal{C}_{G_{x}}\omega_{h}\right)(x,z),
\end{equation}
where $\omega_{h}=H_{+}G_{x+}^{-1}=H_{-}G_{x-}^{-1}$. Analogously, define the transformed function
\begin{equation}
H^{(1)}(x,z):=H(x,z)e^{\frac{i}{2}zx\widehat{\sigma}_{3}}\eta^{-1}(z),
\end{equation}
and consider the $G^{(2)}(z)$  given by \eqref{g21}--\eqref{g23}. Then, we successively construct the following two RH problems:
\begin{rhp}\label{RHh1}
Find a row-vector-valued function $H^{(1)}(x,z)$ satisfying:
\begin{itemize}
  \item Analyticity: $H^{(1)}(x,\cdot)$ is analytic in $\mathbb{C}\setminus\Lambda$.
  \item Jump condition: $H^{(1)}(x,z)$ has continuous boundary values $H^{(1)}_{\pm}(x,z)$ as $z'\rightarrow z$ from $\Omega^{\pm}$ to $\Lambda$ (see Figure \ref{f3}) and
      \begin{equation}
      H^{(1)}_{+}(x,z)=H^{(1)}_{-}(x,z)G^{(1)}_{x}(z),\quad z\in\Lambda,
      \end{equation}
      where $G^{(1)}_{x}(z)=e^{\frac{i}{2}zx\widehat{\sigma}_{3}}G^{(1)}(z)$ with
      \begin{equation}
      G^{(1)}(z)=\eta_{-}(z)G(z)\eta_{+}^{-1}(z).
      \end{equation}
  \item Normalization: $H^{(1)}(x,z)\rightarrow (0,1)$ as $z\rightarrow\infty$.
\end{itemize}
\end{rhp}
\begin{rhp}\label{RHh2}
Find a row-vector-valued function $H^{(2)}(x,z)$ satisfying:
\begin{itemize}
  \item Analyticity: $H^{(2)}(x,\cdot)$ is analytic in $\mathbb{C}\setminus\widetilde{\Lambda}$.
  \item Jump condition: $H^{(2)}(x,z)$ has continuous boundary values $H^{(2)}_{\pm}(x,z)$ as $z'\rightarrow z$ from $\widetilde{\Omega}^{\pm}$ to $\widetilde{\Lambda}$ (see Figure \ref{f4}) and
      \begin{equation}
      H^{(2)}_{+}(x,z)=H^{(2)}_{-}(x,z)G^{(2)}_{x}(z),\quad z\in\widetilde{\Lambda},
      \end{equation}
      where $G^{(2)}_{x}(z)=e^{\frac{i}{2}zx\widehat{\sigma}_{3}}G^{(2)}(z)$ and $G^{(2)}(z)$  is given by \eqref{g21}--\eqref{g23}.
  \item Normalization: $H^{(2)}(x,z)\rightarrow (0,1)$ as $z\rightarrow\infty$.
\end{itemize}
\end{rhp}
Denote by $\omega_{h}^{(1)}$ and $\omega_{h}^{(2)}$ the solutions of the Beals-Coifman integral equations for the RH problems \ref{RHh1} and \ref{RHh2}, respectively. We will then employ $G^{(2)}(z)$ and the reconstruction formula \eqref{sfs} to derive estimates for $u_{xx}$.

\subsection{Time evolution of the jump matrix}\label{sub34}
The classical inverse scattering transform method usually first ignores time $t$, considers only the Lax pair \eqref{psix} related to $x$, and then presents the time evolution. Before analyzing the solvability of the RH problems, we first perform time evolution of the jump matrix. Given the fundamental solutions $N_{\pm}(x,t,k)$ of the Lax pair \eqref{psix}--\eqref{psit} that satisfy the jump condition
\begin{equation*}
N_{+}(x,t,k)=N_{-}(x,t,k)\,e^{\frac{i}{2}k^{2}x\widehat{\sigma}_{3}}\,R(t,k).
\end{equation*}
Substituting $N_{-}(x,t,k)\,e^{\frac{i}{2}k^{2}x\sigma_{3}}\,R(t,k)$ into \eqref{psit} and using the asymptotic condition $N_{\pm}(x,t,k)\rightarrow I$, as $x\rightarrow +\infty$, we obtain
\begin{equation*}
R_{t}(t,k)=\frac{i}{2}k^{-2}[\sigma_{3},R(t,k)].
\end{equation*}
Integration then yields
\begin{equation*}
R(t,k)=e^{\frac{i}{2}k^{-2}t\widehat{\sigma}_{3}}R(0,k).
\end{equation*}
Using the relation between $G$ and $R$ in \eqref{rg} and the fact that  $B(k)$ depends only on the variable $k$, we have
\begin{equation}
G(t,z)=e^{\frac{i}{2}z^{-1}t\widehat{\sigma}_{3}}G(0,z).
\end{equation}
Clearly,
\begin{equation}
G_{\pm}(t,z)=e^{\frac{i}{2}z^{-1}t\widehat{\sigma}_{3}}G_{\pm}(0,z).
\end{equation}

From Propositions 3.10 and 3.14 in our previous work \cite{Chengglo}, we can derive $z^{-1}(G_{\pm}-I )\in H^{1}(\partial\Omega^{\pm})$. According to Proposition \ref{vuyf}, our previous results \cite{Chengglo}, and the relation between $G_{\pm}^{(2)}$ and $G_{\pm}$, we obtain the following proposition.
\begin{proposition}\label{timep}
For initial data $u_{0}\in  H^{3}(\mathbb{R})\cap H^{2,1}(\mathbb{R})$, we have $G^{(2)}_{\pm}(t,\cdot)-I \in H^{1}(\partial\widetilde{\Omega}^{\pm})\cap L^{2,1}(\partial\widetilde{\Omega}^{\pm})$ and $z^{-1}(G^{(2)}_{\pm}(t,\cdot)-I )\in H^{1}(\partial\widetilde{\Omega}^{\pm})$,  $\forall\, t\in [-T,T]$ with $T>0$. Moreover, the map $u_{0}\rightarrow G^{(2)}_{\pm}-I$ is Lipschitz continuous from  $H^{3}(\mathbb{R})\cap H^{2,1}(\mathbb{R})$ to $C([-T,T], H^{1}(\partial\widetilde{\Omega}^{\pm})\cap L^{2,1}(\partial\widetilde{\Omega}^{\pm}))$ for every $T>0$.
\end{proposition}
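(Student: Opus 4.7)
My plan is to transport the $t=0$ regularity of $G^{(2)}_{\pm}$ to arbitrary $t\in[-T,T]$ via the explicit time evolution derived immediately above the proposition, and then to extract the Lipschitz continuity in $u_{0}$ from the $t=0$ Lipschitz estimates already available in \cite{Chengglo} and Proposition \ref{vuyf}.

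The starting point is the identity $G_{\pm}(t,z)=e^{\frac{i}{2}z^{-1}t\widehat{\sigma}_{3}}G_{\pm}(0,z)$, which, since $e^{\frac{i}{2}z^{-1}t\widehat{\sigma}_{3}}I=I$, yields
\begin{equation*}
G_{\pm}(t,z)-I=e^{\frac{i}{2}z^{-1}t\widehat{\sigma}_{3}}\bigl(G_{\pm}(0,z)-I\bigr).
\end{equation*}
Because the off-diagonal entries of the triangular matrix $G_{\pm}(0,z)-I$ are simply multiplied by the unimodular phase $e^{\pm iz^{-1}t}$, pointwise moduli are unchanged, so the $L^{2}$ and $L^{2,1}$ portions of the norm are preserved exactly on $\partial\Omega^{\pm}$. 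Since $\eta_{\pm}$ is rational, bounded and time-independent, and since $G^{(2)}\equiv I$ on the auxiliary arcs $\widetilde{\Lambda}_{5}\cup\widetilde{\Lambda}_{6}$, these preservation properties carry over from $G_{\pm}-I$ to $G^{(2)}_{\pm}-I$ on each smooth piece of $\partial\widetilde{\Omega}^{\pm}$, and the matching condition at $\pm R_{\infty}^{2}$ is inherited from Proposition \ref{vuyf} because the time evolution factor equals $I$ up to order $l-1=0$ there after combining with the $\eta$-correction.

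The $H^{1}$ estimate requires more care: differentiating $e^{\pm iz^{-1}t}f(z)$ produces an extra term $\mp iz^{-2}t\,e^{\pm iz^{-1}t}f(z)$. Away from $z=0$ this is harmless and bounded polynomially in $t$. The only piece of $\widetilde{\Lambda}$ approaching $z=0$ is $\widetilde{\Lambda}_{3}$, and here I would invoke the auxiliary hypothesis $z^{-1}(G_{\pm}(0,\cdot)-I)\in H^{1}(\partial\Omega^{\pm})$ provided by Propositions 3.10 and 3.14 of \cite{Chengglo}. Writing $z^{-1}(G_{\pm}(t,z)-I)=e^{\frac{i}{2}z^{-1}t\widehat{\sigma}_{3}}\bigl[z^{-1}(G_{\pm}(0,z)-I)\bigr]$ and exploiting the triangular structure of the factorization on $\widetilde{\Lambda}_{3}$ (the $(2,1)$ factor carries $r_{02}(z)=-z\,\overline{r_{01}(z)}=O(z)$, and after the reorientation defining $G^{(2)}$ together with the $\eta_{\pm}$-correction the remaining off-diagonal entry also carries a $z$-factor), the Leibniz expansion produces only terms of the form $e^{\pm iz^{-1}t}\bigl[\partial_{z}(z^{-1}(G_{\pm}(0)-I))+ O(t)\cdot z^{-1}(G_{\pm}(0)-I)\bigr]$, which lie in $L^{2}(\partial\widetilde{\Omega}^{\pm})$ uniformly in $t\in[-T,T]$. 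This gives the required $H^{1}\cap L^{2,1}$ regularity of $G^{(2)}_{\pm}(t,\cdot)-I$ and of $z^{-1}(G^{(2)}_{\pm}(t,\cdot)-I)\in H^{1}(\partial\widetilde{\Omega}^{\pm})$.

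For the Lipschitz statement, I would combine the Lipschitz dependence of the $t=0$ direct-scattering data on $u_{0}$ from \cite{Chengglo} (Propositions 3.5, 3.9, 3.14) with the bounds just obtained: for each fixed $t$ the phase-multiplication $f\mapsto e^{\pm iz^{-1}t}f$ is a bounded operator on the relevant Sobolev spaces with operator norm polynomial in $t$, and $t\mapsto e^{\pm iz^{-1}t}$ is continuous in the strong operator topology; composing with the bounded $\eta$-correction and the trivial extension to $\widetilde{\Lambda}_{5,6}$ yields a Lipschitz map $u_{0}\mapsto G^{(2)}_{\pm}-I$ into $C([-T,T];H^{1}(\partial\widetilde{\Omega}^{\pm})\cap L^{2,1}(\partial\widetilde{\Omega}^{\pm}))$. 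The main technical obstacle is the $z^{-2}$ blow-up of $\partial_{z}e^{\pm iz^{-1}t}$ at $z=0$ on $\widetilde{\Lambda}_{3}$; keeping track of the auxiliary condition $z^{-1}(G_{\pm}-I)\in H^{1}$ throughout the direct-scattering analysis of \cite{Chengglo} is precisely what provides the extra vanishing needed to absorb this singularity.
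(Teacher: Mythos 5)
Your overall strategy coincides with the paper's: Section 3.4 derives exactly the evolution $G_{\pm}(t,z)=e^{\frac{i}{2}z^{-1}t\widehat{\sigma}_{3}}G_{\pm}(0,z)$, imports the $t=0$ regularity $G_{\pm}-I\in H^{1}\cap L^{2,1}$ and $z^{-1}(G_{\pm}-I)\in H^{1}$ from Propositions 3.5, 3.9, 3.10 and 3.14 of \cite{Chengglo}, and transfers everything to $G^{(2)}_{\pm}$ through Proposition \ref{vuyf}, the $\eta_{\pm}$-correction and the trivial extension to $\widetilde{\Lambda}_{5}\cup\widetilde{\Lambda}_{6}$. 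The paper gives essentially no more detail than that, so your proposal is, if anything, more explicit about where the difficulty sits.

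However, your quantitative treatment of the point $z=0$ on $\widetilde{\Lambda}_{3}$ contains a slip that leaves a genuine gap. Differentiating $e^{\pm iz^{-1}t}g(z)$ gives $e^{\pm iz^{-1}t}\bigl(g'(z)\mp itz^{-2}g(z)\bigr)$, so with $g=z^{-1}(G_{\pm}(0,\cdot)-I)$ the second Leibniz term is $O(t)\,z^{-2}\cdot z^{-1}(G_{\pm}(0)-I)$, not $O(t)\cdot z^{-1}(G_{\pm}(0)-I)$ as you display. The hypothesis $z^{-1}(G_{\pm}(0,\cdot)-I)\in H^{1}$ only gives boundedness (H\"older-$\tfrac12$ continuity) of that quotient near $z=0$; it does not make $z^{-1}$ times it, let alone $z^{-2}$ times it, square-integrable near the origin. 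Concretely, to place $G_{\pm}(t,\cdot)-I$ in $H^{1}$ you need $z^{-2}(G_{\pm}(0,\cdot)-I)\in L^{2}$ near $z=0$, and to place $z^{-1}(G_{\pm}(t,\cdot)-I)$ in $H^{1}$ you need $z^{-3}(G_{\pm}(0,\cdot)-I)\in L^{2}$ there; both amount to quantitative vanishing of the cut-off scattering data at $z=0$ (roughly $G_{3\pm}(0,z)-I=O(z^{2})$ or better), which has to be extracted from the small-$z$ expansions of $c_{0}(k)/k$, $r_{01}$, $r_{02}$ in \cite{Chengglo} and is not implied by the two conditions you invoke. Your parenthetical remark that the off-diagonal entries ``carry a $z$-factor'' points in the right direction but does not close the estimate; this is the one step you would need to make rigorous (and it is also the step the paper itself delegates entirely to \cite{Chengglo}). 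The rest --- unimodularity of the phase on $\mathbb{R}$ (on the circular arcs $|e^{\frac{i}{2}z^{-1}t}|$ is merely bounded by $e^{cT}$ rather than equal to $1$, but that suffices), the inheritance of the matching condition at $\pm R_{\infty}^{2}$, and the Lipschitz continuity via boundedness of the multiplication operators --- is sound.
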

\subsection{Unique solvability of the RH problems}\label{sub35}
This subsection focuses on the solvability of the above RH problems for the given jump matrix $G_{\pm}(z)$ satisfying $G_{\pm}-I \in H^{1}(\partial\Omega^{\pm})\cap L^{2,1}(\partial\Omega^{\pm})$.
It follows from equations \eqref{nu} and \eqref{abh} that the solvability of both RH Problems \ref{RH2} and \ref{RH3} is equivalent to the invertibility of the operator $I-\mathcal{C}_{G_{x}}$. Moreover, the solvability of other RH problems can be directly derived from these two RH problems. The relationships between these RH problems are summarized in Figure \ref{ft}.

\begin{figure}
 \centering
\begin{tikzpicture}
[node distance=1cm and 1.3cm,
   arrow/.style={->}]
\node (N) {$N$};
\node[right=of N] (M) {$M$};
\node[right=of M] (P) {$P=[AMB]^{(R_1)}$};
\node[right=of P] (P1) {$P^{(1)}$};
\node[right=of P1] (P2) {$P^{(2)}$};
\node[below=of P] (H) {$H=[N_{\infty}^{-1}ANB]^{(R_2)}$};
\node[right=of H] (H1) {$H^{(1)}$};
\node[right=of H1] (H2) {$H^{(2)}$};
\draw[arrow] (N) -- (M)node [above=0.15em, midway] {$N_{\infty}^{-1}$};
\draw[arrow] (M) -- (P);
\draw[arrow] (P) -- (P1)node [above=0.15em, midway] {$\eta^{-1}$};
\draw[arrow] (P1) -- (P2)node [above=0.15em, midway] {$G^{(2)}$};
\draw[arrow] (H) -- (H1)node [above=0.15em, midway] {$\eta^{-1}$};
\draw[arrow] (H1) -- (H2)node [above=0.15em, midway] {$G^{(2)}$};
\draw[<->] (P) -- (H) node [right=0.4em, midway] {$I-\mathcal{C}_{G_{x}}$};
\draw[->] (0,-0.3) -- (0,-1.75) -- (3.3,-1.75);
\end{tikzpicture}
\caption{The relationships among RH problems.}
\label{ft}
\end{figure}

However, the jump matrix $G(z)$ constructed on the $z$-plane via \eqref{j1}--\eqref{j4} fails to satisfy Zhou's vanishing lemma. Fortunately, the jump matrix $R(k)$ on the $k$-plane satisfies this condition; see Lemma \ref{vanish}. We therefore establish a connection between
the operators $I-\mathcal{C}_{G_{x}}$ and $I-\mathcal{C}_{R_{x}}$, as detailed in the following lemma. This connection plays an important role in proving the solvability of the RH problems. In fact, the solvability of the RH problems can be guaranteed under the weaker condition that $G_{\pm}-I\in H^{\frac{1}{2}+\varepsilon}(\partial\Omega^{\pm})$ for $\varepsilon>0$. This relaxation also ensures the Lipschitz continuity of the subsequent inverse scattering transform.

\begin{lemma}\label{munu}
Let $G_{\pm}-I\in H^{\frac{1}{2}+\varepsilon}(\partial\Omega^{\pm})$ for $\varepsilon>0$. Assume that for all $x\in\mathbb{R}$,  $\tilde{\omega}(x,\cdot)=(\tilde{\omega}_{11},\tilde{\omega}_{12})(x,\cdot)$ is the solution of the integral equation in $L^{2}(\Lambda)$
\begin{equation}\label{guu}
\left(I-\mathcal{C}_{G_{x}}\right)\tilde{\omega}=0.
\end{equation}
Define the matrix function
\begin{equation}\label{tilde}
\tilde{\mu}(x,k)=\begin{pmatrix}\tilde{\mu}_{11}(x,k)&\tilde{\mu}_{12}(x,k)\\
\tilde{\mu}_{21}(x,k)&\tilde{\mu}_{22}(x,k)\end{pmatrix}=\begin{pmatrix}\tilde{\omega}_{11}(x,k^{2})&-ik\tilde{\omega}_{12}(x,k^{2})\\
-ik\overline{\tilde{\omega}_{12}(x,\overline{k}^{2})}&\overline{\tilde{\omega}_{11}(x,\overline{k}^{2})}\end{pmatrix},
\end{equation}
then
$\tilde{\mu}(x,\cdot)\in L^{2}(\Gamma)$ solves
\begin{equation}\label{gus}
\left(I-\mathcal{C}_{R_{x}}\right)\tilde{\mu}=0.
\end{equation}
\end{lemma}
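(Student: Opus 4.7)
The plan is to exploit two symmetries built into the definition \eqref{tilde}: the entries of $\tilde{\mu}(x,k)$ have definite parity under $k\mapsto -k$ (matching the quadratic change of variable $z=k^{2}$), and $\tilde{\mu}$ satisfies the Schwarz-type relation $\tilde{\mu}(k)=\sigma_{2}\overline{\tilde{\mu}(\bar{k})}\sigma_{2}$; both symmetries are shared by $R(k)$ through \eqref{syms}. The first row of $\tilde{\mu}$ reads compactly as $\tilde{\omega}(k^{2})\,B^{-1}(k)$, and the second row is then forced by the Schwarz symmetry. The membership $\tilde{\mu}(x,\cdot)\in L^{2}(\Gamma)$ follows from a direct change of variable $\eta=k^{2}$ (Jacobian $|2k|$) on the two sheets of $\Gamma$ covering $\Lambda$, combined with the pointwise control of $\tilde{\omega}$ near $z=0$ and at infinity that is inherited from $G_{\pm}-I\in H^{\frac{1}{2}+\varepsilon}(\partial\Omega^{\pm})$.

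The algebraic heart of the proof is the factorization
\begin{equation*}
R_{x}(k)=B(k)\,G_{x}(k^{2})\,B^{-1}(k),\qquad R_{x\pm}(k)=B(k)\,G_{x\pm}(k^{2})\,B^{-1}(k),
\end{equation*}
which follows from \eqref{rg} because the diagonal matrix $B(k)$ commutes with the conjugation action $e^{\frac{i}{2}k^{2}x\hat{\sigma}_{3}}$. Substituting the first row $\tilde{\mu}^{(R_{1})}(k)=\tilde{\omega}(k^{2})\,B^{-1}(k)$ into the Beals--Coifman expression \eqref{opv} and using this factorization, the two integrands on $\Gamma$ take the form $\tilde{\omega}(\xi^{2})[I-G_{x-}(\xi^{2})]B^{-1}(\xi)$ and $\tilde{\omega}(\xi^{2})[G_{x+}(\xi^{2})-I]B^{-1}(\xi)$. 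Each of their entries has definite parity in $\xi$: the prefactor $\tilde{\omega}(\xi^{2})[G_{x\pm}(\xi^{2})-I]$ is even, while the two diagonal entries of $B^{-1}(\xi)=\operatorname{diag}(1,-i\xi)$ are even and odd, respectively. Splitting the Cauchy kernel as
\begin{equation*}
\frac{1}{\xi-k}=\frac{\xi}{\xi^{2}-k^{2}}+\frac{k}{\xi^{2}-k^{2}},
\end{equation*}
only the part of matching parity survives the $\xi\mapsto-\xi$ symmetrization of the $\Gamma$-integral; the change of variable $\eta=\xi^{2}$ on the two sheets then collapses the $\Gamma$-integral into a Cauchy integral on $\Lambda$ with kernel $(\eta-k^{2})^{-1}$. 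After pulling the factor $B^{-1}(k)$ out on both sides, the first-row equation becomes exactly $(I-\mathcal{C}_{G_{x}})\tilde{\omega}(k^{2})=0$, which holds by hypothesis \eqref{guu}. The second-row equation then follows automatically by applying the twisted involution $k\mapsto\bar{k}$ together with $\sigma_{2}$-conjugation to the first-row equation, since both $R(k)$ and the ansatz \eqref{tilde} respect this symmetry.

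The principal obstacle will be the careful bookkeeping of the branched double cover $\Gamma\to\Lambda$ under $k\mapsto k^{2}$: one must verify on each of the four pieces $\Gamma_{1},\dots,\Gamma_{4}$ that the orientations indicated in Figure \ref{f2} map consistently to the orientations of $\Lambda$ in Figure \ref{f3}, and that the two sides $\widetilde{D}^{\pm}$ of $\Gamma$ correspond to $\Omega^{\pm}$ of $\Lambda$ with the correct signs so that the nontangential boundary values $\mathcal{C}^{\pm}_{\Gamma}$ transform into $\mathcal{C}^{\pm}_{\Lambda}$. Once these conventions are pinned down on each arc, the partial-fraction reduction described above produces the desired identity with no further complication.
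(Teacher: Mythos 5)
Your overall strategy --- conjugating by $B(k)$ so that $R_{x}=B\,G_{x}\,B^{-1}$, splitting the Cauchy kernel by parity, collapsing the two sheets of $\Gamma$ onto $\Lambda$ via $z=k^{2}$, and obtaining the second row from the Schwarz symmetry --- is exactly the mechanism of the paper's proof: its identities \eqref{1tra}--\eqref{3odd} are precisely your partial-fraction/parity step, and it carries out the $\sigma_{2}$-conjugation claim explicitly entry by entry rather than invoking it abstractly. The one step that does not go through as you describe it is the membership $\tilde{\mu}(x,\cdot)\in L^{2}(\Gamma)$. A direct change of variable gives
\begin{equation*}
\int_{\Gamma}|\tilde{\mu}_{12}(x,k)|^{2}\,|dk|=\int_{\Lambda}|z|^{1/2}\,|\tilde{\omega}_{12}(x,z)|^{2}\,|dz|,
\qquad
\int_{\Gamma}|\tilde{\mu}_{11}(x,k)|^{2}\,|dk|=\int_{\Lambda}|z|^{-1/2}\,|\tilde{\omega}_{11}(x,z)|^{2}\,|dz|,
\end{equation*}
so the Jacobian produces a weight that blows up at $z=\infty$ for the off-diagonal entries and at $z=0$ for the diagonal ones. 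Neither integral is controlled by $\|\tilde{\omega}\|_{L^{2}(\Lambda)}$ alone, and the ``pointwise control of $\tilde{\omega}$ near $z=0$ and at infinity'' you invoke is not available: a solution of the homogeneous equation \eqref{guu} is a priori only in $L^{2}(\Lambda)$, and the hypothesis $G_{\pm}-I\in H^{\frac{1}{2}+\varepsilon}(\partial\Omega^{\pm})$ does not upgrade this.

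The paper's repair is to prove instead that the \emph{products} $\tilde{\mu}(I-R_{x-})$ and $\tilde{\mu}(R_{x+}-I)$ lie in $L^{2}(\Gamma)$. There the extra factor of the reflection coefficient absorbs the weight: since $|r(k)|^{2}=|r_{1}(z)|\,|r_{2}(z)|$ with $|r_{2}|=|z|\,|r_{1}|$, one has $|z|^{3/2}|r_{1}(z)|^{2}=|r_{1}|^{1/2}|r_{2}|^{3/2}\le\|r_{1}\|_{L^{\infty}}^{1/2}\|r_{2}\|_{L^{\infty}}^{3/2}$, whence $\|\tilde{\mu}_{12}r\|_{L^{2}(\Gamma_{1})}^{2}\lesssim\|\tilde{\omega}_{12}\|_{L^{2}(\Lambda_{1})}^{2}$; the analogous factors $r_{0}$, which vanish at $k=0$, neutralize the $|z|^{-1/2}$ singularity on $\Gamma_{3}$. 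These products are also exactly what is needed to make sense of $\mathcal{C}_{R_{x}}\tilde{\mu}$, and once the identity $\tilde{\mu}=\mathcal{C}_{\Gamma}^{+}\bigl(\tilde{\mu}(I-R_{x-})\bigr)+\mathcal{C}_{\Gamma}^{-}\bigl(\tilde{\mu}(R_{x+}-I)\bigr)$ is established, $\tilde{\mu}\in L^{2}(\Gamma)$ follows from the $L^{2}$-boundedness of $\mathcal{C}_{\Gamma}^{\pm}$. With that substitution your argument coincides with the paper's proof.
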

\begin{proof}
Selecting $k\in\Gamma_{1}$ as a representative case, we provide a detailed proof here. By the definition of operator $\mathcal{C}_{R_{x}}$ in \eqref{opv}, the fact that $\tilde{\mu}(x,k)$ solves \eqref{gus} implies
\begin{equation}\label{mu11}
\begin{aligned}
\tilde{\mu}_{11}(x,k)=&-\mathcal{C}_{\Gamma_{1}}^{+}(\tilde{\mu}_{12}(x,k)r(k)e^{-ik^{2}x})(k)+\int_{\Gamma_{3}}\frac{\tilde{\mu}_{12}(x,\xi)r_{0}(\xi)e^{-i\xi^{2}x}}{\xi-k}\frac{d\xi}{2\pi i}\\
&-\int_{\Gamma_{4}}\frac{\tilde{\mu}_{12}(x,\xi)\psi_{21}^{-}(x_{0},\xi)e^{-i\xi^{2}(x-x_{0})}}{(\xi-k)a(\xi)a_{0}(\xi)}\frac{d\xi}{2\pi i},\\
\end{aligned}
\end{equation}
\begin{equation}\label{mu12}
\begin{aligned}
\tilde{\mu}_{12}(x,k)=&\,-\mathcal{C}_{\Gamma_{1}}^{-}(\tilde{\mu}_{11}(x,k)\overline{r(\overline{k})}e^{ik^{2}x})(k)+\int_{\Gamma_{3}}\frac{\tilde{\mu}_{11}(x,\xi)\overline{r_{0}(\overline{\xi})}e^{i\xi^{2}x}}{\xi-k}\frac{d\xi}{2\pi i}\\
&+\int_{\Gamma_{2}}\frac{\tilde{\mu}_{11}(x,\xi)\psi_{12}^{-}(x_{0},\xi)e^{i\xi^{2}(x-x_{0})}}{(\xi-k)d(\xi)d_{0}(\xi)}\frac{d\xi}{2\pi i},\\
\end{aligned}
\end{equation}
\begin{equation}\label{mu21}
\begin{aligned}
\tilde{\mu}_{21}(x,k)=&-\mathcal{C}_{\Gamma_{1}}^{+}(\tilde{\mu}_{22}(x,k)r(k)e^{-ik^{2}x})(k)+\int_{\Gamma_{3}}\frac{\tilde{\mu}_{22}(x,\xi)r_{0}(\xi)e^{-i\xi^{2}x}}{\xi-k}\frac{d\xi}{2\pi i}\\
&-\int_{\Gamma_{4}}\frac{\tilde{\mu}_{22}(x,\xi)\psi_{21}^{-}(x_{0},\xi)e^{-i\xi^{2}(x-x_{0})}}{(\xi-k)a(\xi)a_{0}(\xi)}\frac{d\xi}{2\pi i},\\
\end{aligned}
\end{equation}
\begin{equation}\label{mu22}
\begin{aligned}
\tilde{\mu}_{22}(x,k)=&\,-\mathcal{C}_{\Gamma_{1}}^{-}(\tilde{\mu}_{21}(x,k)\overline{r(\overline{k})}e^{ik^{2}x})(k)+\int_{\Gamma_{3}}\frac{\tilde{\mu}_{21}(x,\xi)\overline{r_{0}(\overline{\xi})}e^{i\xi^{2}x}}{\xi-k}\frac{d\xi}{2\pi i}\\
&+\int_{\Gamma_{2}}\frac{\tilde{\mu}_{21}(x,\xi)\psi_{12}^{-}(x_{0},\xi)e^{i\xi^{2}(x-x_{0})}}{(\xi-k)d(\xi)d_{0}(\xi)}\frac{d\xi}{2\pi i}.\\
\end{aligned}
\end{equation}
Since $\tilde{\omega}(x,z)$ solves \eqref{guu}, for $z=k^{2}\in\Lambda_{1}$ we obtain
\begin{equation}\label{nu11}
\begin{aligned}
\tilde{\omega}_{11}(x,z)=&-\mathcal{C}_{\Lambda_{1}}^{+}\left(\tilde{\omega}_{12}(x,z)r_{2}(z)e^{-izx}\right)(z)+\int_{\Lambda_{3}}\frac{\tilde{\omega}_{12}(x,\rho)r_{02}(\rho)e^{-i\rho x}}{\rho-z}\frac{d\rho}{2\pi i}\nonumber\\
&+\int_{\Lambda_{4}}\frac{\tilde{\omega}_{12}(x,\rho)\,\rho\, \overline{\Psi_{12}^{-}(x_{0},\overline{\rho})} e^{-i\rho (x-x_{0})}}{(\rho-z)a(\rho)a_{0}(\rho)}\frac{d\rho}{2\pi i},\\
\end{aligned}
\end{equation}
\begin{equation}\label{nu12}
\begin{aligned}
\tilde{\omega}_{12}(x,z)=&\,\mathcal{C}_{\Lambda_{1}}^{-}\left(\tilde{\omega}_{11}(x,z)r_{1}(z)e^{izx}\right)(z)-\int_{\Lambda_{3}}\frac{\tilde{\omega}_{11}(x,\rho)r_{01}(\rho)e^{i\rho x}}{\rho-z}\frac{d\rho}{2\pi i}\\
&+\int_{\Lambda_{2}}\frac{\tilde{\omega}_{11}(x,\rho) \Psi_{12}^{-}(x_{0},\rho) e^{i\rho (x-x_{0})}}{(\rho-z)d(\rho)d_{0}(\rho)}\frac{d\rho}{2\pi i},
\end{aligned}
\end{equation}
where $\rho=\xi^{2}$. Through the spectral parameterization, we can derive the contour integral correspondence
\begin{equation}\label{1tra}
\int_{\Gamma_{j}}f(k) dk=\int_{\Lambda_{j}}\frac{ f(\sqrt{z})-f(-\sqrt{z})}{2\sqrt{z}}dz, \quad 1\leq j\leq4.
\end{equation}
As demonstrated in \cite{Liu16}, if $f$ is an even function on $\Gamma_{j}$, then
\begin{equation}\label{2even}
\mathcal{C}_{\Gamma_{j}}^{\pm}(f)(k)=\mathcal{C}_{\Lambda_{j}}^{\pm}(g)(z),\quad 1\leq j\leq4,
\end{equation}
where $g(z)=f(\sqrt{z})$.
If $f$ is an odd function on $\Gamma_{j}$, then
\begin{equation}\label{3odd}
\mathcal{C}_{\Gamma_{j}}^{\pm}(f)(k)=k\, \mathcal{C}_{\Lambda_{j}}^{\pm}(h)(z),\quad 1\leq j\leq4,
\end{equation}
where $h(z)=f(\sqrt{z})/\sqrt{z}$.

Building upon the above framework, we first prove that the function $\tilde{\mu}$ defined by \eqref{tilde} belongs to $L^{2}(\Gamma)$. Specifically, it suffices to demonstrate that $\tilde{\mu}(I-R_{x-})$ and  $\tilde{\mu}(R_{x+}-I)\in L^{2}(\Gamma)$. As an illustrative example, consider proving $\tilde{\mu}_{12}r\in L^{2}(\Gamma_{1})$. Since $G_{\pm}-I\in H^{\frac{1}{2}+\varepsilon}(\partial\Omega^{\pm})$, by using \eqref{tilde} and parameterization, we obtain
\begin{equation*}
\begin{aligned}
\int_{\Gamma_{1}}|\tilde{\mu}_{12}(x,k)r(k)|^{2}|dk|&=\int_{\Lambda_{1}}|\tilde{\omega}_{12}(x,z)r_{1}(z)|^{2}|z|^{\frac{3}{2}}dz\\
&\leq \|r_{2}\|_{L^{\infty}(\Lambda_{1})}\int_{\Lambda_{1}}|\tilde{\omega}_{12}(x,z)|^{2}|r_{1}(z)||z|^{\frac{1}{2}}dz\\
&\leq\|r_{1}\|_{L^{\infty}(\Lambda_{1})}^{\frac{1}{2}}\|r_{2}\|_{L^{\infty}(\Lambda_{1})}^{\frac{3}{2}}\|\tilde{\omega}_{12}(x,\cdot)\|_{L^{2}(\Lambda_{1})}^{2}.
\end{aligned}
\end{equation*}

Next, we verify the $\tilde{\omega}_{11}(x,z)$ given by $\eqref{nu11}$ satisfies \eqref{mu11}. Since both $\tilde{\mu}_{12}$ and $r$ are odd functions with respect to $k$, by combining \eqref{tilde} and \eqref{2even}, we obtain
\begin{equation*}
-\mathcal{C}_{\Lambda_{1}}^{+}\left(\tilde{\omega}_{12}(x,z)r_{2}(z)e^{-izx}\right)(z)
=-\mathcal{C}_{\Gamma_{1}}^{+}(\tilde{\mu}_{12}(x,k)r(k)e^{-ik^{2}x})(k).
\end{equation*}
Furthermore, it follows from \eqref{tilde} and \eqref{1tra} that
\begin{equation*}
\begin{aligned}
&\int_{\Lambda_{3}}\frac{\tilde{\omega}_{12}(x,\rho)r_{02}(\rho)e^{-i\rho x}}{\rho-z}\frac{d\rho}{2\pi i}
=\int_{\Lambda_{3}}\frac{\tilde{\mu}_{12}(x,\sqrt{\rho})r_{0}(\sqrt{\rho})e^{-i\rho x}}{\rho-k^{2}}\frac{d\rho }{2\pi i}\\
=&\,\frac{1}{2\pi i}\int_{\Lambda_{3}}\frac{\tilde{\mu}_{12}(x,\sqrt{\rho})r_{0}(\sqrt{\rho})e^{-i\rho x}}{\sqrt{\rho}-k}-\frac{\tilde{\mu}_{12}(x,-\sqrt{\rho})r_{0}(-\sqrt{\rho})e^{-i\rho x}}{-\sqrt{\rho}-k}\frac{d\rho }{2\sqrt{\rho}}\\
=&\int_{\Gamma_{3}}\frac{\tilde{\mu}_{12}(x,\xi)r_{0}(\xi)e^{-i\xi^{2} x}}{\xi-k}\frac{d\xi}{2\pi i}
\end{aligned}
\end{equation*}
and the third term of $\tilde{\omega}_{11}(x,z)$ equals
\begin{equation*}
\begin{aligned}
\int_{\Lambda_{4}}\frac{\tilde{\omega}_{12}(x,\rho)\,\rho\, \overline{\Psi_{12}^{-}(x_{0},\overline{\rho})} e^{-i\rho (x-x_{0})}}{(\rho-z)a(\rho)a_{0}(\rho)}\frac{d\rho}{2\pi i}
=&-\int_{\Lambda_{4}}\frac{\tilde{\mu}_{12}(x,\sqrt{\rho})\psi_{21}^{-}(x_{0},\sqrt{\rho}) e^{-i\rho (x-x_{0})}}{(\rho-k^{2})a(\sqrt{\rho})a_{0}(\sqrt{\rho})}\frac{d\rho}{2\pi i}\\
=&-\int_{\Gamma_{4}}\frac{\tilde{\mu}_{12}(x,\xi)\psi_{21}^{-}(x_{0},\xi) e^{-i\xi^{2} (x-x_{0})}}{(\xi-k)a(\xi)a_{0}(\xi)}\frac{d\xi}{2\pi i}.
\end{aligned}
\end{equation*}
Then, we verify the $\overline{\tilde{\omega}_{11}(x,\overline{k}^{2})}$ satisfy \eqref{mu22}. From \eqref{tilde} and \eqref{2even}, we obtain
\begin{equation*}
\begin{aligned}
-\overline{\mathcal{C}_{\Lambda_{1}}^{+}\left(\tilde{\omega}_{12}(x,z)r_{2}(z)e^{-izx}\right)(z)}
=&\,\mathcal{C}_{\Lambda_{1}}^{-}\left(\overline{\tilde{\omega}_{12}(x,z)}\,\overline{r_{2}(z)}e^{izx}\right)(z)\\
=&\,-\mathcal{C}_{\Gamma_{1}}^{-}(\tilde{\mu}_{21}(x,k)\overline{r(\overline{k})}e^{ik^{2}x})(k),
\end{aligned}
\end{equation*}
where we have used the identity $\overline{\mathcal{C}_{\Lambda_{1}}^{+}(f)}=-\mathcal{C}_{\Lambda_{1}}^{-}(\overline{f})$. Similarly, we can calculate that
\begin{equation*}
\begin{aligned}
\int_{\Lambda_{3}}\frac{\overline{\tilde{\omega}_{12}(x,\rho)} \,\overline{r_{02}(\rho)}e^{i\rho x}}{\rho-z}\frac{d\rho}{2\pi i}
=-\int_{\Gamma_{3}}\frac{\tilde{\mu}_{21}(x,\xi) \overline{r_{0}(\overline{\xi})}e^{i\xi^{2} x}}{\xi-k}\frac{d\xi}{2\pi i}
\end{aligned}
\end{equation*}
by using \eqref{tilde} and \eqref{1tra}. Let $g(z)=\int_{\Lambda_{4}}\frac{f(\rho)}{\rho-z}\frac{d\rho}{2\pi i}$. By parameterization, we derive
\begin{equation*}
\overline{g(\overline{z})}=-\int_{\Lambda_{2}}\frac{\overline{f(\overline{\rho})}}{\rho-z}\frac{d\rho}{2\pi i},
\end{equation*}
which implies
\begin{equation*}
\begin{aligned}
\text{the third term of }\overline{\tilde{\omega}_{11}(x,\overline{k}^{2})}
=&-\int_{\Lambda_{2}}\frac{\overline{\tilde{\omega}_{12}(x,\overline{\rho})}\, \rho \, \Psi_{12}^{-}(x_{0},\rho) e^{i\rho (x-x_{0})}}{(\rho-z)d(\rho)d_{0}(\rho)}\frac{d\rho}{2\pi i}\\
=&\int_{\Gamma_{2}}\frac{\tilde{\mu}_{21}(x,\xi) \psi_{12}^{-}(x_{0},\xi) e^{i\xi^{2} (x-x_{0})}}{(\xi-k)d(\xi)d_{0}(\xi)}\frac{d\xi}{2\pi i},
\end{aligned}
\end{equation*}
where the final identity follows from \eqref{tilde} and \eqref{1tra}. Applying the same analytical approach, we can verify that $-ik\tilde{\omega}_{12}(x,k^{2})$ and $-ik\overline{\tilde{\omega}_{12}(x,\overline{k}^{2})}$ satisfy expressions \eqref{mu12} and \eqref{mu21} respectively. Furthermore, when $k\in \Gamma_{j}$ for $2\leq j\leq4$, the conclusions remain valid via similar arguments.
\end{proof}

\begin{proposition}\label{con}
Assume that $G_{\pm}-I\in H^{\frac{1}{2}+\varepsilon}(\partial\Omega^{\pm})$ for $\varepsilon>0$. Then RH Problems \ref{RH2} and \ref{RH3} are uniquely solvable.
\end{proposition}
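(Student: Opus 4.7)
The strategy is to reduce unique solvability of RH Problems \ref{RH2} and \ref{RH3} to invertibility of the singular integral operator $I - \mathcal{C}_{G_x}$ on $L^2(\Lambda)$, and then prove this invertibility by combining Fredholm theory on self-intersecting Sobolev contours with a transfer of the trivial-kernel question to the $k$-plane via Lemma \ref{munu}, where Lemma \ref{vanish} puts us in the setting of Zhou's vanishing lemma.

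First I would observe that, by the Beals-Coifman correspondence recalled after \eqref{nu} and \eqref{abh}, unique solvability of each RH problem is equivalent to invertibility of $I - \mathcal{C}_{G_x}$ on $L^2(\Lambda)$, with $P$ and $H$ then recovered from $\omega_p$ and $\omega_h$ through Cauchy integrals and the prescribed normalizations. Under the hypothesis $G_\pm - I \in H^{\frac{1}{2}+\varepsilon}(\partial\Omega^\pm)$, together with the self-intersection matching condition verified in Proposition \ref{vuyf}, the general framework of \cite{Zhou1989b,Zhou1998} for Cauchy operators on self-intersecting Sobolev contours implies that $\mathcal{C}_{G_x}$ is bounded on $L^2(\Lambda)$ and that $I - \mathcal{C}_{G_x}$ is Fredholm of index zero. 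It therefore suffices to prove that its kernel is trivial.

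Let $\tilde\omega\in L^2(\Lambda)$ satisfy $(I-\mathcal{C}_{G_x})\tilde\omega = 0$. By Lemma \ref{munu}, the matrix function $\tilde\mu\in L^2(\Gamma)$ defined through \eqref{tilde} lies in the kernel of $I - \mathcal{C}_{R_x}$ on the $k$-plane. Associated with $\tilde\mu$ is the Cauchy integral
\begin{equation*}
\tilde N(x,k) := \mathcal{C}_\Gamma\bigl(\tilde\mu\,(R_{x+}-R_{x-})\bigr)(k),\qquad k\in\mathbb{C}\setminus\Gamma,
\end{equation*}
which is analytic off $\Gamma$, vanishes as $k\to\infty$, and has boundary values obeying $\tilde N_+(x,k) = \tilde N_-(x,k) R_x(k)$ on $\Gamma$. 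Lemma \ref{vanish}(i)--(ii) supplies exactly the hypotheses of Zhou's vanishing lemma: positive-definiteness of $R+R^\dagger$ on the real locus of $\Gamma$, and the Schwarz-reflection identity $R(k) = R^\dagger(\bar k)$ on the remaining arcs. The vanishing lemma then forces $\tilde N\equiv 0$, whence $\tilde\mu\equiv 0$, and by the injectivity of the correspondence \eqref{tilde} we obtain $\tilde\omega\equiv 0$. Combined with the index-zero Fredholm property, this yields the required invertibility of $I-\mathcal{C}_{G_x}$ and hence unique solvability of both RH problems.

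The main obstacle I expect is controlling the Fredholm-index contribution from the self-intersection points $\pm R_\infty^2$: these are corner points of $\Lambda$, and on such contours the index of $I-\mathcal{C}_{G_x}$ on $L^2$ is governed by the local behavior of $G_\pm$ there. The matching identity $G_1 G_2^{-1} G_3 G_4^{-1} = I+o(1)$ established in Proposition \ref{vuyf} is precisely what trivializes this local symbol and forces the index to vanish; the relaxation to $H^{\frac{1}{2}+\varepsilon}$ regularity is compatible with that argument and is what later delivers Lipschitz continuity of the inverse scattering map. A secondary delicacy is that Zhou's vanishing lemma is classically stated on real contours, whereas $\Gamma$ also contains the circle $\Gamma_\infty$; Lemma \ref{vanish}(ii) is designed exactly so as to replace the Schwarz symmetry on $\Gamma\setminus\mathbb{R}$ and permit the vanishing lemma to be applied uniformly on the full contour.
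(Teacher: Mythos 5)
Your proposal is correct and follows essentially the same route as the paper: reduce unique solvability to invertibility of $I-\mathcal{C}_{G_{x}}$ on $L^{2}(\Lambda)$, establish the zero-index Fredholm property under the $H^{\frac{1}{2}+\varepsilon}$ hypothesis, and kill the kernel by transferring to the $k$-plane via Lemma \ref{munu} and invoking Zhou's vanishing lemma through Lemma \ref{vanish}. The only cosmetic difference is that the paper justifies the Fredholm property by uniform rational approximation of $G_{x\pm}-I$ rather than by your symbol analysis at the corner points, but the substance of the argument is identical.
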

\begin{proof}
The proof reduces to demonstrating unique solvability of the Beals-Coifman equation \eqref{nu} and \eqref{abh}. For $G_{\pm}-I\in H^{\frac{1}{2}+\varepsilon}(\partial\Omega^{\pm})$, the operator  $I-\mathcal{C}_{G_{x}}$ is a Fredholm operator with zero index, as $G_{x+}-I$ and $I-G_{x-}$ admit uniform rational approximations.
If $\tilde{\omega}\in \ker_{L^{2}(\Lambda)}(I-\mathcal{C}_{G_{x}})$,  then Lemma \ref{munu} allows us to construct a matrix $\tilde{\mu}\in \ker_{L^{2}(\Gamma)}(I-\mathcal{C}_{R_{x}})$. Combining Lemma \ref{vanish} and Zhou's vanishing lemma in \cite{Zhou1989b}, we have $\ker_{L^{2}(\Gamma)}(I-\mathcal{C}_{R_{x}})=0$,  hence $\tilde{\mu}=0$. By reversing the relation \eqref{tilde}, we conclude $\tilde{\omega}=0$. The Fredholm theory implies that
$I-\mathcal{C}_{G_{x}}$ is invertible in $L^{2}(\Lambda)$. Moreover, the unique solutions to the RH problems \ref{RH2} and \ref{RH3} can be respectively expressed as
\begin{equation}\label{ppp}
P(x,z)=(1,0)+\frac{1}{2\pi i}\int_{\Lambda}\frac{\omega_{p}(x,\rho)\left(G_{x+}(\rho)-G_{x-}(\rho)\right)}{\rho-z}d\rho,
\end{equation}
\begin{equation}\label{hhh}
     H(x,z)=(0,1)+\frac{1}{2\pi i}\int_{\Lambda}\frac{\omega_{h}(x,\rho)\left(G_{x+}(\rho)-G_{x-}(\rho)\right)}{\rho-z}d\rho.
      \end{equation}
\end{proof}
\section{Reconstruction and estimation of the potential}\label{sec4}
\subsection{Reconstruction formulas}\label{sub41}
We have constructed the formulas \eqref{recp1} and  \eqref{sfs} at $z\to\infty$ to reconstruct $u_{x}$ and $u_{xx}$. To recover the original potential $u$, we now need to analyse the asymptotic expansion of $N(x,k)$ as $k\to 0$ by employing the $t$-Lax pair \eqref{psit}.

Substituting
\begin{equation}
N(x,k)=N_{0}(x)+kN_{1}(x)+k^{2}N_{2}(x)+O(k^{3}),\quad k\rightarrow0
\end{equation}
into \eqref{psix} and \eqref{psit}, we can derive
\begin{equation}\label{rez}
u(x)=\lim_{k\rightarrow0}\left(k^{-1}N(x,k)\right)_{12}.
\end{equation}
From \eqref{mm} and \eqref{map}, the  formula \eqref{rez} can be restructured as
\begin{equation}\label{recp2}
\begin{aligned}
u(x)=&\,\lim_{k\rightarrow0}\left(k^{-1}M(x,k)\right)_{12}e^{-i\int_{x}^{\infty}|M_{-1}^{(12)}(y)|^{2}dy}\\
=&-i\lim_{z\rightarrow0}\left(P(x,z)\right)_{12}e^{-i\int_{x}^{\infty}|p(y)|^{2}dy},
\end{aligned}
\end{equation}
where $M_{-1}^{(12)}(x)$ and $p(x)$ are given by \eqref{m-1} and \eqref{px}, respectively.

Reconstruction formulas \eqref{recp1}, \eqref{sfs} and \eqref{recp2} indicate that we need only to estimate $p(x)$, $h(x)$ and $u(x)$. Here, the $p(x)$ and $h(x)$ are given by \eqref{px} and \eqref{sfs}, respectively. Using the expressions for $P(x,z)$ and $H(x,z)$ in \eqref{ppp}--\eqref{hhh}, we obtain
\begin{align}
p(x)= & \left(-\frac{1}{2\pi i} \int_{\Lambda}\omega_{p}(x,z)\left(G_{x+}(z)-G_{x-}(z)\right)dz\right)_{12},\label{rpp}\\
h(x)= &\left(-\frac{1}{2\pi i} \int_{\Lambda}\omega_{h}(x,z)\left(G_{x+}(z)-G_{x-}(z)\right)dz\right)_{11},\label{rhh}\\
u(x)= &-\frac{e^{-i\int_{x}^{\infty}|p(y)|^{2}dy}}{2\pi} \left(\int_{\Lambda}\omega_{p}(x,z)\left(z^{-1}G_{x+}(z)-z^{-1}G_{x-}(z)\right)dz\right)_{12}.\label{ruu}
\end{align}
Through transformations between RH problems and relations of the corresponding Beals-Coifman solutions, the reconstruction formulas \eqref{rpp}--\eqref{ruu} can be rewritten as
\begin{align}
&p(x)= \left(-\frac{1}{2\pi i} \int_{\widetilde{\Lambda}}\omega^{(2)}(x,z)\left(G^{(2)}_{x+}(z)-G^{(2)}_{x-}(z)\right)dz\right)_{12},\label{rrp}\\
&h(x)= \left(-\frac{1}{2\pi i} \int_{\widetilde{\Lambda}}\omega^{(2)}(x,z)\left(G^{(2)}_{x+}(z)-G^{(2)}_{x-}(z)\right)dz\right)_{21},\label{rrh}\\
&u(x)= -\frac{e^{-i\int_{x}^{\infty}|p(y)|^{2}dy}}{2\pi} \left(\int_{\widetilde{\Lambda}}\omega^{(2)}(x,z)\left(z^{-1}G^{(2)}_{x+}(z)-z^{-1}G^{(2)}_{x-}(z)\right)dz\right)_{12},\label{rru}
\end{align}
where
\begin{equation}
\omega^{(2)}=\begin{pmatrix}\omega_{p}^{(2)}\\ \omega_{h}^{(2)}\end{pmatrix}=(I-\mathcal{C}_{G^{(2)}_{x}})^{-1}I.
\end{equation}
According to previous results \cite{LiY}, we can deduce that
$(I-\mathcal{C}_{G^{(2)}_{x}})^{-1}$ is bounded from  $L^{2}(\widetilde{\Lambda})$ to $L^{2}(\widetilde{\Lambda})$, and the map
\begin{equation}\label{lpxz}
G^{(2)}\mapsto\{x\mapsto (I-\mathcal{C}_{G^{(2)}_{x}})^{-1}\}
\end{equation}
is Lipschitz continuous into the space $C\left([0,\infty);\mathscr{B}\left(L^{2}\right)\right)$.

\subsection{Estimates of the reconstructed potential}\label{sub42}
\begin{figure}
\begin{center}
\begin{tikzpicture}
\draw [ ](-4,0)--(2,0)  node[right, scale=1] {$\mathbb{R}$};
\draw (1,0) arc (0:180:2);
\draw(1,0) arc (0:180:2 and 1);
\draw[fill,black] (1,0) circle [radius=0.03];
\draw[fill,black] (-3,0) circle [radius=0.03];
\node  at (-3,-0.4)  {$-R_{\infty}^{2}$};
\node  at (1,-0.4)  {$R_{\infty}^{2}$};
\draw [-latex](-1,0)--(-0.9,0);
\draw [-latex](1.5,0)--(1.6,0);
\draw [-latex](-3.5,0)--(-3.4,0);
\draw [-latex]  (-1,2)--(-0.9,2);
\draw [-latex]  (-1,1)--(-1.1,1);
\node at (-2.2, 2.1)  {$\widetilde{\Lambda}_{2}$};
\node at (-1.5, 1.3)  {$\widetilde{\Lambda}_{5}$};

\draw [ ](3,0)--(9,0)  node[right, scale=1] {$\mathbb{R}$};
\draw[fill,black] (8,0) circle [radius=0.03];
\draw[fill,black] (4,0) circle [radius=0.03];
\draw (8,0) arc (0:-180:2);
\draw(8,0) arc (0:-180:2 and 1);
\draw [-latex](6,0)--(6.1,0);
\draw [-latex](8.5,0)--(8.6,0);
\draw [-latex](3.5,0)--(3.6,0);
\draw [-latex](6,-1)--(5.9,-1);
\draw [-latex](6,-2)--(6.1,-2);
\node  at (4,0.4)  {$-R_{\infty}^{2}$};
\node  at (8,0.4)  {$R_{\infty}^{2}$};
\node at (7.3, -2)  {$\widetilde{\Lambda}_{4}$};
\node at (6.5, -1.35)  {$\widetilde{\Lambda}_{6}$};
\end{tikzpicture}
\end{center}
\caption{The auxiliary contours $\widetilde{\Lambda}_{+}=(\mathbb{C}^{+}\cap\widetilde{\Lambda})\cup\mathbb{R}$ (left) and $\widetilde{\Lambda}_{-}=(\mathbb{C}^{-}\cap\widetilde{\Lambda})\cup\mathbb{R}$ (right).}
\label{fjia}
 \end{figure}
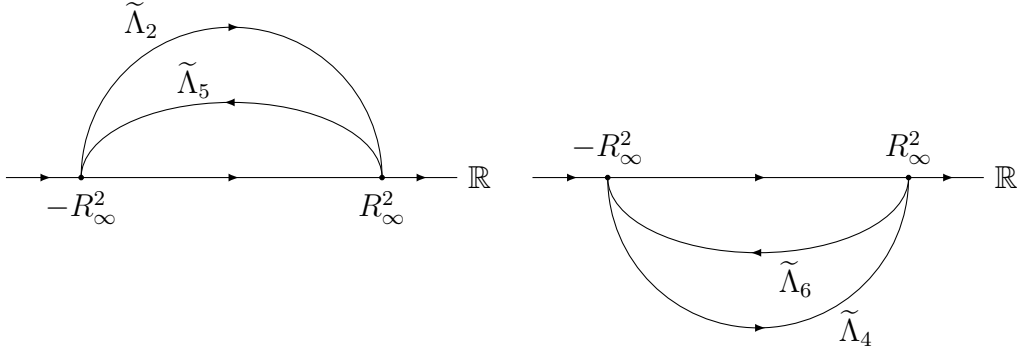

We will use \eqref{rrp}--\eqref{rru} to estimate the reconstruction potential.
Define the auxiliary contours
\begin{equation}
\widetilde{\Lambda}_{\pm}:=(\mathbb{C}^{\pm}\cap\widetilde{\Lambda})\cup\mathbb{R},
\end{equation}
as shown in Figure \ref{fjia}.  Here we note that
\begin{equation}\label{po1}
G^{(2)}_{x+}(z)-I=0,\quad z\in \widetilde{\Lambda}\setminus\widetilde{\Lambda}_{+}
\end{equation}
and
\begin{equation}\label{po2}
G^{(2)}_{x-}(z)-I=0,\quad z\in \widetilde{\Lambda}\setminus\widetilde{\Lambda}_{-}.
\end{equation}
We state the following lemma without proof. A detailed proof can be found in \cite{Zhou1998}.
\begin{lemma}\label{imlem}
Let $G^{(2)}_{\pm}-I\in H^{1}(\partial\widetilde{\Omega}^{\pm})$. Then, for all $x\geq0$, we have
\begin{align}
&\|\mathcal{C}_{\mathbb{R}}^{-}(G^{(2)}_{x+}-I)\|_{L^{2}(\mathbb{R})}\leq\frac{1}{\sqrt{1+x^{2}}}\|G^{(2)}_{+}-I\|_{H^{1}},\label{e1}\\
&\|\mathcal{C}_{\mathbb{R}}^{+}(G^{(2)}_{x-}-I)\|_{L^{2}(\mathbb{R})}\leq\frac{1}{\sqrt{1+x^{2}}}\|G^{(2)}_{-}-I\|_{H^{1}},\label{e2}\\
&\left\|\mathcal{C}_{\widetilde{\Lambda}\rightarrow\widetilde{\Lambda}_{-}}^{-}(G^{(2)}_{x+}-I)\right\|_{L^{2}(\widetilde{\Lambda}_{-})}\leq\frac{c}{\sqrt{1+x^{2}}}\|G^{(2)}_{+}-I\|_{H^{1}},\label{e3}\\
&\left\|\mathcal{C}_{\widetilde{\Lambda}\rightarrow\widetilde{\Lambda}_{+}}^{+}(G^{(2)}_{x-}-I)\right\|_{L^{2}(\widetilde{\Lambda}_{+})}\leq\frac{c}{\sqrt{1+x^{2}}}\|G^{(2)}_{-}-I\|_{H^{1}},\label{e4}\\
&\|G^{(2)}_{x+}-I\|_{L^{2}(\widetilde{\Lambda}_{2})}\leq\frac{c}{\sqrt{1+x^{2}}}\|G^{(2)}_{+}-I\|_{H^{1}},\label{e5}\\
&\|G^{(2)}_{x-}-I\|_{L^{2}(\widetilde{\Lambda}_{4})}\leq\frac{c}{\sqrt{1+x^{2}}}\|G^{(2)}_{-}-I\|_{H^{1}},\label{e6}\\
&\|(\mathcal{C}_{G^{(2)}_{x}})^{2}I\|_{L^{2}(\widetilde{\Lambda})}\leq\frac{c}{\sqrt{1+x^{2}}}\|G^{(2)}_{-}-I\|_{H^{1}}\|G^{(2)}_{+}-I\|_{H^{1}}.\label{e7}
\end{align}
\end{lemma}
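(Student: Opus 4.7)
The plan follows Zhou's methodology in \cite{Zhou1998}: all seven estimates exploit the oscillatory factor $e^{\frac{i}{2}zx\widehat{\sigma}_{3}}$ in $G^{(2)}_{x\pm}$ together with the $H^{1}$ regularity of $G^{(2)}_{\pm}-I$ to convert regularity into decay in $x$. The triangular structure established in Sections \ref{sub32}--\ref{sub33} (strictly upper triangular $G^{(2)}_{j+}$, strictly lower triangular $G^{(2)}_{j-}$) ensures that the nonzero entries of $G^{(2)}_{\pm}-I$ carry the correct exponential $e^{\pm izx}$ for decay in $\widetilde{\Omega}^{\pm}$, while the matching/vanishing at $\pm R_{\infty}^{2}$ (Proposition \ref{vuyf}) makes the $H^{1}$-framework compatible with the self-intersecting contour.

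For \eqref{e1} and \eqref{e2}, I would use the Fourier characterization $\mathcal{C}_{\mathbb{R}}^{-}f=-\mathcal{F}^{-1}(\widehat{f}\,\chi_{\xi<0})$. Applied entrywise to the (strictly upper triangular) matrix $G^{(2)}_{+}-I$ on $\mathbb{R}$, the only nonzero off-diagonal entry is multiplied by $e^{izx}$, which shifts the Fourier variable by $x$, so that for $x\geq 0$
\begin{equation*}
\|\mathcal{C}_{\mathbb{R}}^{-}(e^{izx}f)\|_{L^{2}}^{2}=\int_{-\infty}^{-x}|\widehat{f}(\xi)|^{2}d\xi\leq\frac{1}{1+x^{2}}\int_{-\infty}^{-x}\langle\xi\rangle^{2}|\widehat{f}(\xi)|^{2}d\xi\leq\frac{\|f\|_{H^{1}}^{2}}{1+x^{2}},
\end{equation*}
which gives \eqref{e1}; the argument for \eqref{e2} is the symmetric statement using $\mathcal{C}_{\mathbb{R}}^{+}$ and the lower triangular structure of $G^{(2)}_{-}$.

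For \eqref{e5} and \eqref{e6}, the factor $|e^{izx}|=e^{-x\,\mathrm{Im}\,z}$ on the arcs $\widetilde{\Lambda}_{2}$ or $\widetilde{\Lambda}_{4}$ decays exponentially in the interior of each arc but is order one near the endpoints $\pm R_{\infty}^{2}$; the linear vanishing of $G^{(2)}_{\pm}-I$ at these points (Proposition \ref{vuyf}) combined with the Sobolev embedding $H^{1}\hookrightarrow L^{\infty}$ along each smooth piece yields, after parameterizing $z=R_{\infty}^{2}e^{i\theta}$,
\begin{equation*}
\|G^{(2)}_{x+}-I\|_{L^{2}(\widetilde{\Lambda}_{2})}^{2}\lesssim\|G^{(2)}_{+}-I\|_{H^{1}}^{2}\int_{0}^{\pi}\sin^{2}\theta\,e^{-2xR_{\infty}^{2}\sin\theta}d\theta\lesssim\langle x\rangle^{-2}\|G^{(2)}_{+}-I\|_{H^{1}}^{2}.
\end{equation*}
The Cauchy projections in \eqref{e3} and \eqref{e4} then split into a real-line part (controlled by \eqref{e1}--\eqref{e2}) and an arc part (controlled by \eqref{e5}--\eqref{e6} together with the boundedness of Cauchy integration between disjoint contour components).

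Finally, \eqref{e7} is obtained from the operator bound
\begin{equation*}
\|(\mathcal{C}_{G_{x}^{(2)}})^{2}I\|_{L^{2}(\widetilde{\Lambda})}\leq\|\mathcal{C}_{G_{x}^{(2)}}\|_{L^{2}\to L^{2}}\,\|\mathcal{C}_{G_{x}^{(2)}}I\|_{L^{2}(\widetilde{\Lambda})},
\end{equation*}
where the operator norm is controlled by $\|G^{(2)}_{\pm}-I\|_{L^{\infty}}\lesssim\|G^{(2)}_{\pm}-I\|_{H^{1}}$ via Sobolev embedding, and $\|\mathcal{C}_{G_{x}^{(2)}}I\|_{L^{2}}\lesssim\langle x\rangle^{-1}(\|G^{(2)}_{+}-I\|_{H^{1}}+\|G^{(2)}_{-}-I\|_{H^{1}})$ follows from \eqref{e3}--\eqref{e4} and the support restrictions \eqref{po1}--\eqref{po2}. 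The principal obstacle will be the careful bookkeeping across the six pieces of $\widetilde{\Lambda}$, in particular tracking which factor is nonzero on which piece and ensuring the parameterized $H^{1}$ regularity persists across the nonsmooth points $\pm R_{\infty}^{2}$ so that the Fourier/integration-by-parts step introduces no boundary contribution.
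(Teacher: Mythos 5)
First, a point of reference: the paper does not actually prove Lemma \ref{imlem} — it is stated without proof, with a pointer to \cite{Zhou1998} — so your sketch is being measured against the standard argument it relies on rather than against an in-paper proof. For \eqref{e1}--\eqref{e6} your outline is essentially that argument and is sound: the Fourier computation for \eqref{e1}--\eqref{e2} is exactly right (the vanishing of $G^{(2)}_{\pm}-I$ at $\pm R_{\infty}^{2}$ is what lets the restrictions to $\widetilde{\Lambda}_{1}$ and $\widetilde{\Lambda}_{3}$ glue into a single $H^{1}(\mathbb{R})$ function, as you note), and the arc estimates \eqref{e5}--\eqref{e6} work as you describe. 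One small overstatement there: the construction of $\eta$ with $l=1$ only forces $G^{(2)}_{\pm}(z)-I=o(1)$ at $z=\pm R_{\infty}^{2}$, not linear vanishing; combined with $H^{1}$ regularity this gives $|G^{(2)}_{+}(z)-I|\lesssim |z\mp R_{\infty}^{2}|^{1/2}\|G^{(2)}_{+}-I\|_{H^{1}}$, so the integrand is $\sin\theta\,e^{-2xR_{\infty}^{2}\sin\theta}$ rather than $\sin^{2}\theta\,e^{-2xR_{\infty}^{2}\sin\theta}$ — which still integrates to $O(x^{-2})$ and yields \eqref{e5}, so nothing breaks.

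The genuine gap is in your treatment of \eqref{e7}. The factorization $\|(\mathcal{C}_{G^{(2)}_{x}})^{2}I\|\leq\|\mathcal{C}_{G^{(2)}_{x}}\|_{L^{2}\to L^{2}}\|\mathcal{C}_{G^{(2)}_{x}}I\|_{L^{2}}$ produces a bound of the form $\langle x\rangle^{-1}\bigl(\|G^{(2)}_{+}-I\|_{H^{1}}+\|G^{(2)}_{-}-I\|_{H^{1}}\bigr)^{2}$, which does \emph{not} imply the stated product bound: if $G^{(2)}_{-}\equiv I$ the right side of \eqref{e7} is zero while your bound is not. The product form is a real feature of the triangular structure, not a cosmetic choice. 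Expanding,
\begin{equation*}
(\mathcal{C}_{G^{(2)}_{x}})^{2}I=\mathcal{C}_{\widetilde{\Lambda}}^{+}\Bigl[\bigl(\mathcal{C}_{\widetilde{\Lambda}}^{-}(G^{(2)}_{x+}-I)\bigr)\bigl(I-G^{(2)}_{x-}\bigr)\Bigr]+\mathcal{C}_{\widetilde{\Lambda}}^{-}\Bigl[\bigl(\mathcal{C}_{\widetilde{\Lambda}}^{+}(I-G^{(2)}_{x-})\bigr)\bigl(G^{(2)}_{x+}-I\bigr)\Bigr],
\end{equation*}
because the two same-sign terms are products of two strictly upper (respectively strictly lower) triangular $2\times 2$ matrices and therefore vanish identically. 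Each surviving cross term is then estimated by the $L^{2}$-boundedness of $\mathcal{C}_{\widetilde{\Lambda}}^{\pm}$ together with $\|fg\|_{L^{2}}\leq\|f\|_{L^{2}}\|g\|_{L^{\infty}}$, where the $L^{2}$ factor decays like $\langle x\rangle^{-1}\|G^{(2)}_{+}-I\|_{H^{1}}$ (or the $-$ analogue) by \eqref{e1}--\eqref{e6}, and the $L^{\infty}$ factor of the opposite sign is controlled by $\|G^{(2)}_{\mp}-I\|_{H^{1}}$ via Sobolev embedding on each smooth piece (using $|e^{\mp izx}|\leq1$ on the support $\widetilde{\Lambda}_{\mp}$ for $x\geq0$). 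This gives exactly $\langle x\rangle^{-1}\|G^{(2)}_{-}-I\|_{H^{1}}\|G^{(2)}_{+}-I\|_{H^{1}}$. Your weaker bound would in fact suffice for the way \eqref{e7} is used in Proposition \ref{refa}, but it does not prove \eqref{e7} as stated.
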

Equations \eqref{au1}--\eqref{au2} imply that analogous inequalities hold for $x\leq0$, though we omit their explicit form here. The Cauchy integral operators $\mathcal{C}_{\widetilde{\Lambda}\rightarrow\widetilde{\Lambda}'}^{\pm}$ are constructed by mapping $g$ to the boundary values $(\mathcal{C}_{\widetilde{\Lambda}}g)_{\pm}$ on $\widetilde{\Lambda}'$. In particular,
$\mathcal{C}_{\widetilde{\Lambda}}^{\pm}=\mathcal{C}_{\widetilde{\Lambda}\rightarrow\widetilde{\Lambda}}^{\pm}$.
\begin{proposition}\label{refa}
Suppose that $G^{(2)}_{\pm}-I\in H^{1}(\partial\widetilde{\Omega}^{\pm})\cap L^{2,1}(\partial\widetilde{\Omega}^{\pm})$
 and $z^{-1}(G^{(2)}_{\pm}-I )\in H^{1}(\partial\widetilde{\Omega}^{\pm})$.
Then the reconstructed potential satisfies $u\in H^{3}(\mathbb{R})\cap H^{2,1}(\mathbb{R})$.
\end{proposition}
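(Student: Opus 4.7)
The plan is to split the target regularity $u \in H^3(\mathbb{R}) \cap H^{2,1}(\mathbb{R})$ into the three pieces $u_x \in H^1 \cap L^{2,1}$, $u_{xx} \in H^1 \cap L^{2,1}$ and $u \in L^{2,1}$, and to extract each of them from the corresponding reconstruction formula \eqref{rrp}--\eqref{rru} by combining Zhou's $L^2$-Sobolev bijectivity with the decay estimates of Lemma \ref{imlem}. An algebraic inversion of \eqref{recp1} and of \eqref{sfs} --- the latter expressing $u_{xx}$ as $i\bar h\,e^{-2i\int_x^\infty |u_y|^2 dy} + i|u_x|^2 u_x$ --- then passes these $p$-, $h$- and $u$-estimates back to the required derivative regularity of $u$.

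For Steps 1 and 2 I would expand the Beals--Coifman solution as $\omega^{(2)} = I + (I - \mathcal{C}_{G^{(2)}_x})^{-1}\mathcal{C}_{G^{(2)}_x} I$ and substitute into \eqref{rrp} and \eqref{rrh}. The leading identity term produces a Fourier-type integral whose $H^1 \cap L^{2,1}$-norm in $x$ is bounded directly by $\|G^{(2)}_\pm - I\|_{H^1(\partial\widetilde{\Omega}^\pm) \cap L^{2,1}(\partial\widetilde{\Omega}^\pm)}$; one $x$-derivative is absorbed by an integration by parts in $z$ that transfers a factor $z$ onto $G^{(2)}_\pm$, which is licit under the $H^1$-hypothesis. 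The resolvent perturbation is controlled by the estimates \eqref{e1}--\eqref{e7} of Lemma \ref{imlem}, which provide the pivotal $\frac{1}{\sqrt{1+x^2}}$-decay needed to accommodate the weight $\langle x\rangle$, together with the uniform $L^2$-boundedness of $(I - \mathcal{C}_{G^{(2)}_x})^{-1}$ from \eqref{lpxz}. This yields $p,\,h \in H^1 \cap L^{2,1}$; combining with \eqref{recp1} and the algebraic inverse of \eqref{sfs}, and using $H^1 \hookrightarrow L^\infty$ to handle the cubic term $|u_x|^2 u_x$, I obtain $u_x,\,u_{xx} \in H^1 \cap L^{2,1}$. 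In particular $u \in H^3(\mathbb{R})$ with $u_x,\,u_{xx} \in L^{2,1}(\mathbb{R})$.

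For Step 3 only the bound $\langle x\rangle u \in L^2$ remains, and this is where the additional hypothesis $z^{-1}(G^{(2)}_\pm - I) \in H^1(\partial\widetilde{\Omega}^\pm)$ enters. Formula \eqref{rru} carries an explicit factor $z^{-1}$ inside the Cauchy integral; rewriting the integrand as $\omega^{(2)} \cdot z^{-1}(G^{(2)}_{x+} - G^{(2)}_{x-})$ treats $z^{-1}(G^{(2)}_\pm - I)$ as a new, regular-at-the-origin jump amplitude that lies in $H^1(\partial\widetilde{\Omega}^\pm)$ by assumption. Rerunning the Step 1 analysis with this modified data, Lemma \ref{imlem} again supplies the $\frac{1}{\sqrt{1+x^2}}$-decay, and $\langle x\rangle u \in L^2$ follows, completing the proof.

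The main obstacle is Step 3. The jump contour $\widetilde{\Lambda}$ passes through the origin $z=0$, which is a singular point of the spectral transformation \eqref{trans} (the matrix $B(k)$ blows up there), so the $z^{-1}$ factor in \eqref{rru} cannot be removed by a smooth change of contour. The auxiliary hypothesis $z^{-1}(G^{(2)}_\pm - I) \in H^1(\partial\widetilde{\Omega}^\pm)$ carefully tracked through Proposition \ref{timep} is exactly what is needed to keep Lemma \ref{imlem} applicable after this replacement; verifying that the composition with the resolvent $(I-\mathcal{C}_{G^{(2)}_x})^{-1}$ does not destroy the weighted-$L^2$ decay is the delicate technical point. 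By contrast, Steps 1 and 2 amount to a fairly standard Zhou-type bijectivity argument on the regularized contour $\widetilde{\Lambda}$ that was designed precisely for this purpose in Proposition \ref{vuyf}.
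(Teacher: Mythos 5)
Your overall architecture (estimate $p$, $h$ and $u$ from \eqref{rrp}--\eqref{rru}, then invert the reconstruction formulas algebraically, with $z^{-1}(G^{(2)}_{\pm}-I)\in H^{1}$ reserved for the $u\in L^{2,1}$ step) matches the paper's, but there are two concrete gaps. First, the expansion $\omega^{(2)}=I+(I-\mathcal{C}_{G^{(2)}_{x}})^{-1}\mathcal{C}_{G^{(2)}_{x}}I$ is too coarse: estimating the remainder by Cauchy--Schwarz with the bounds of Lemma \ref{imlem} and the uniform boundedness of the resolvent yields only $O\bigl((1+x^{2})^{-1/2}\bigr)$, and $\langle x\rangle\cdot(1+x^{2})^{-1/2}\sim 1\notin L^{2}(\mathbb{R})$, so this does \emph{not} ``accommodate the weight $\langle x\rangle$'' as you claim. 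The paper expands one order further, $\omega^{(2)}=I+\mathcal{C}I+\mathcal{C}^{2}I+\mathcal{C}(I-\mathcal{C})^{-1}\mathcal{C}^{2}I$, giving $F_{1}+F_{2}+F_{3}+F_{4}$: the term $F_{2}$ is \emph{diagonal} (because $G^{(2)}_{x\pm}-I$ are strictly triangular --- precisely the structure engineered in RH Problem \ref{RHp2}) and so drops out of the off-diagonal entries that define $p$ and $h$, while $F_{3}$ and $F_{4}$ each pair \emph{two} factors that decay like $(1+x^{2})^{-1/2}$ (a Cauchy projection taken from the ``wrong'' side against either another such projection or the restriction to $\widetilde{\Lambda}_{2},\widetilde{\Lambda}_{4}$), yielding the needed $O\bigl((1+x^{2})^{-1}\bigr)$. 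Without the diagonal cancellation of $F_{2}$ and the quadratic decay of $F_{3},F_{4}$, the $L^{2,1}$ conclusion for $p$ and $h$ does not follow.

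Second, your treatment of the top derivative is unsupported. To get $u\in H^{3}$ you need $h_{x}\in L^{2}$ (equivalently $u_{xxx}\in L^{2}$), and differentiating \eqref{rrh} in $x$ hits not only the explicit exponential $e^{\frac{i}{2}zx\widehat{\sigma}_{3}}$ but also the resolvent $(I-\mathcal{C}_{G^{(2)}_{x}})^{-1}$ hidden inside $\omega^{(2)}$; an ``integration by parts in $z$'' does not address this, and in any case the factor of $z$ produced by $\partial_{x}e^{izx}$ is paid for by the $L^{2,1}(\partial\widetilde{\Omega}^{\pm})$ hypothesis, not the $H^{1}$ one. The paper's device is to observe that $\omega^{(2)}$ satisfies the $x$-part of the Lax pair in the form \eqref{ahf}, so that $\frac{d}{dx}\bigl(\omega^{(2)}(G^{(2)}_{x+}-G^{(2)}_{x-})\bigr)=\frac{i}{2}z[\sigma_{3},\cdot]+\widetilde{Q}\,\cdot$; the resulting integrals $I_{1}$ (controlled because $z(G^{(2)}_{\pm}-I)\in L^{2}$) and $I_{2}$ (controlled because $\widetilde{Q}\in L^{\infty}$, itself a consequence of the $F$-decomposition) then give $h_{x}\in L^{2}$. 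You need either this ODE argument or some substitute for differentiating the resolvent; as written, your Steps 1--2 assert $p,h\in H^{1}$ without a valid mechanism.
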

\begin{proof}
Both \eqref{rrp} and \eqref{rrh} admit a decomposition
\begin{equation}\label{sgb}
\int_{\widetilde{\Lambda}}\omega^{(2)}\left(G^{(2)}_{x+}-G^{(2)}_{x-}\right)dz=F_{1}+F_{2}+F_{3}+F_{4},
\end{equation}
 with components defined as
\begin{equation*}
\begin{aligned}
&F_{1}=\int_{\widetilde{\Lambda}}\left(G^{(2)}_{x+}-G^{(2)}_{x-}\right)dz,\\
&F_{2}=\int_{\widetilde{\Lambda}}\left(\mathcal{C}_{G^{(2)}_{x}}I\right)\left(G^{(2)}_{x+}-G^{(2)}_{x-}\right)dz,\\
&F_{3}=\int_{\widetilde{\Lambda}}\left(\left(\mathcal{C}_{G^{(2)}_{x}}\right)^{2}I\right)\left(G^{(2)}_{x+}-G^{(2)}_{x-}\right)dz,\\
&F_{4}=\int_{\widetilde{\Lambda}}\left(\mathcal{C}_{G^{(2)}_{x}}\left(I-\mathcal{C}_{G^{(2)}_{x}}\right)^{-1}\left(\mathcal{C}_{G^{(2)}_{x}}\right)^{2}I\right)\left(G^{(2)}_{x+}-G^{(2)}_{x-}\right)dz.
\end{aligned}
\end{equation*}
By leveraging the properties of the Fourier and Laplace transforms, we conclude that $F_{1}\in L^{2,1}$. Since $F_{2}$ is a diagonal matrix, it does not contribute to the reconstruction of $p$ and $h$.

The estimation of $F_{3}$ and $F_{4}$ requires careful application of both the definition of the Cauchy integral operators $\mathcal{C}_{\widetilde{\Lambda}\rightarrow\widetilde{\Lambda}'}^{\pm}$ and their boundedness between the function spaces $L^{2}(\widetilde{\Lambda})$ and $L^{2}(\widetilde{\Lambda}')$. From \eqref{po1}--\eqref{po2}, we obtain
\begin{equation}
\begin{aligned}\nonumber
\left|F_{3}\right|\leq&\left|\left(\int_{\mathbb{R}}+\int_{\widetilde{\Lambda}_{2}}\right)\left(\mathcal{C}_{\widetilde{\Lambda}}^{+}\left(\mathcal{C}_{\widetilde{\Lambda}}^{-}\left(G^{(2)}_{x+}-I\right)\right)\left(I-G^{(2)}_{x-}\right)\right)\left(G^{(2)}_{x+}-I\right)dz\right|\\
&+\left|\left(\int_{\mathbb{R}}+\int_{\widetilde{\Lambda}_{4}}\right)\left(\mathcal{C}_{\widetilde{\Lambda}}^{-}\left(\mathcal{C}_{\widetilde{\Lambda}}^{+}\left(I-G^{(2)}_{x-}\right)\right)\left(G^{(2)}_{x+}-I\right)\right)\left(I-G^{(2)}_{x-}\right)dz\right|\\
\leq&\left|\int_{\mathbb{R}}\left(\mathcal{C}_{\widetilde{\Lambda}_{-}\rightarrow\mathbb{R}}^{+}\left(\mathcal{C}_{\widetilde{\Lambda}\rightarrow\widetilde{\Lambda}_{-}}^{-}\left(G^{(2)}_{x+}-I\right)\right)\left(I-G^{(2)}_{x-}\right)\right)\mathcal{C}_{\mathbb{R}}^{-}\left(G^{(2)}_{x+}-I\right)dz\right|\\
&+\left|\int_{\widetilde{\Lambda}_{2}}\left(\mathcal{C}_{\widetilde{\Lambda}_{-}\rightarrow\widetilde{\Lambda}_{2}}^{+}\left(\mathcal{C}_{\widetilde{\Lambda}\rightarrow\widetilde{\Lambda}_{-}}^{-}\left(G^{(2)}_{x+}-I\right)\right)\left(I-G^{(2)}_{x-}\right)\right)\left(G^{(2)}_{x+}-I\right)dz\right|\\
&+\left|\int_{\mathbb{R}}\left(\mathcal{C}_{\widetilde{\Lambda}_{+}\rightarrow\mathbb{R}}^{-}\left(\mathcal{C}_{\widetilde{\Lambda}\rightarrow\widetilde{\Lambda}_{+}}^{+}\left(I-G^{(2)}_{x-}\right)\right)\left(G^{(2)}_{x+}-I\right)\right)\mathcal{C}_{\mathbb{R}}^{+}\left(I-G^{(2)}_{x-}\right)dz\right|\\
&+\left|\int_{\widetilde{\Lambda}_{4}}\left(\mathcal{C}_{\widetilde{\Lambda}_{+}\rightarrow\widetilde{\Lambda}_{4}}^{-}\left(\mathcal{C}_{\widetilde{\Lambda}\rightarrow\widetilde{\Lambda}_{+}}^{+}\left(I-G^{(2)}_{x-}\right)\right)\left(G^{(2)}_{x+}-I\right)\right)\left(I-G^{(2)}_{x-}\right)dz\right|.
\end{aligned}
\end{equation}
By applying estimates \eqref{e1}--\eqref{e7} established in Lemma \ref{imlem}, we can further derive the following bound for $\left|F_{3}\right|$
\begin{align*}
\left|F_{3}\right|\lesssim&\left\|\left(\mathcal{C}_{\widetilde{\Lambda}\rightarrow\widetilde{\Lambda}_{-}}^{-}\left(G^{(2)}_{x+}-I\right)\right)\right\|_{L^{2}(\widetilde{\Lambda}_{-})}\left(\left\|\mathcal{C}_{\mathbb{R}}^{-}\left(G^{(2)}_{x+}-I\right)\right\|_{L^{2}(\mathbb{R})}+\left\|G^{(2)}_{x+}-I\right\|_{L^{2}(\widetilde{\Lambda}_{2})}\right)\\
&+\left\|\left(\mathcal{C}_{\widetilde{\Lambda}\rightarrow\widetilde{\Lambda}_{+}}^{+}\left(I-G^{(2)}_{x-}\right)\right)\right\|_{L^{2}(\widetilde{\Lambda}_{+})}\left(\left\|\mathcal{C}_{\mathbb{R}}^{+}\left(I-G^{(2)}_{x-}\right)\right\|_{L^{2}(\mathbb{R})}+\left\|I-G^{(2)}_{x-}\right\|_{L^{2}(\widetilde{\Lambda}_{4})}\right)\\
\lesssim&\,\frac{1}{1+x^{2}}.
\end{align*}
The fourth integral component $F_{4}$ admits the representation
\begin{equation*}
F_{4}=\int_{\widetilde{\Lambda}}\left(\mathcal{C}_{G^{(2)}_{x}}f\right)\left(G^{(2)}_{x+}-G^{(2)}_{x-}\right)dz,
\end{equation*}
with $f=(I-\mathcal{C}_{G^{(2)}_{x}})^{-1}(\mathcal{C}_{G^{(2)}_{x}})^{2}I$. The boundedness property of $(I-\mathcal{C}_{G^{(2)}_{x}})^{-1}$ combined with \eqref{e7} establishes the estimate
\begin{equation*}
\|f\|_{L^{2}(\widetilde{\Lambda})}\lesssim\|(\mathcal{C}_{G^{(2)}_{x}})^{2}I\|_{L^{2}(\widetilde{\Lambda})}\lesssim\frac{1}{\sqrt{1+x^{2}}}.
\end{equation*}
It follows from \eqref{po1}--\eqref{po2} that
\begin{equation*}
\begin{aligned}
\left|F_{4}\right|\leq&\left|\left(\int_{\mathbb{R}}+\int_{\widetilde{\Lambda}_{2}}\right)\left(\mathcal{C}_{\widetilde{\Lambda}}^{+}f\left(I-G^{(2)}_{x-}\right)\right)\left(G^{(2)}_{x+}-I\right)dz\right|\\
&+\left|\left(\int_{\mathbb{R}}+\int_{\widetilde{\Lambda}_{4}}\right)\left(\mathcal{C}_{\widetilde{\Lambda}}^{-}f\left(G^{(2)}_{x+}-I\right)\right)\left(I-G^{(2)}_{x-}\right)dz\right|\\
\leq&\left|\int_{\mathbb{R}}\left(\mathcal{C}_{\widetilde{\Lambda}\rightarrow\mathbb{R}}^{+}f\left(I-G^{(2)}_{x-}\right)\right)\mathcal{C}_{\mathbb{R}}^{-}\left(G^{(2)}_{x+}-I\right)dz\right|\\
&+\left|\int_{\widetilde{\Lambda}_{2}}\left(\mathcal{C}_{\widetilde{\Lambda}\rightarrow\widetilde{\Lambda}_{2}}^{+}f\left(I-G^{(2)}_{x-}\right)\right)\left(G^{(2)}_{x+}-I\right)dz\right|\\
&+\left|\int_{\mathbb{R}}\left(\mathcal{C}_{\widetilde{\Lambda}\rightarrow\mathbb{R}}^{-}f\left(G^{(2)}_{x+}-I\right)\right)\mathcal{C}_{\mathbb{R}}^{+}\left(I-G^{(2)}_{x-}\right)dz\right|\\
&+\left|\int_{\widetilde{\Lambda}_{4}}\left(\mathcal{C}_{\widetilde{\Lambda}\rightarrow\widetilde{\Lambda}_{4}}^{-}f\left(G^{(2)}_{x+}-I\right)\right)\left(I-G^{(2)}_{x-}\right)dz\right|.
\end{aligned}
\end{equation*}
Then, an application of Lemma \ref{imlem} yields
\begin{equation*}
\begin{aligned}
\left|F_{4}\right|
\lesssim&\left\|f\left(I-G^{(2)}_{x-}\right)\right\|_{L^{2}(\widetilde{\Lambda})}\left(\left\|\mathcal{C}_{\mathbb{R}}^{-}\left(G^{(2)}_{x+}-I\right)\right\|_{L^{2}(\mathbb{R})}+\left\|G^{(2)}_{x+}-I\right\|_{L^{2}(\widetilde{\Lambda}_{2})}\right)\\
&+\left\|f\left(G^{(2)}_{x+}-I\right)\right\|_{L^{2}(\widetilde{\Lambda})}\left(\left\|\mathcal{C}_{\mathbb{R}}^{+}\left(I-G^{(2)}_{x-}\right)\right\|_{L^{2}(\mathbb{R})}+\left\|I-G^{(2)}_{x-}\right\|_{L^{2}(\widetilde{\Lambda}_{4})}\right)\\
\lesssim&\,\frac{1}{1+x^{2}}.
\end{aligned}
\end{equation*}
Base on the above analysis for $F_{i}$, $1\leq i\leq4$, we conclude that $p,h\in L^{2,1}(\mathbb{R})$. Proposition \ref{timep} implies that $z^{-1}(G^{(2)}_{\pm}-I )\in H^{1}(\partial\widetilde{\Omega}^{\pm})$. Thus, by repeating the same analysis to \eqref{rru}, we can obtain $u\in L^{2,1}(\mathbb{R})$.

Finally, we establish that $h_{x}\in L^{2}(\mathbb{R})$. Defining the transform $L(x,z)=e^{i\int_{x}^{\infty}|u_{y}(y)|^{2}dy\sigma_{3}}\Psi(x,z)$, it follows from \eqref{Ps} that
\begin{equation}\label{ahf}
L_{x}=\frac{i}{2}z\left[\sigma_{3},L\right]+\widetilde{Q}L,
\end{equation}
where
\begin{equation*}
\widetilde{Q}=e^{i\int_{x}^{\infty}|u_{y}(y)|^{2}dy\widehat{\sigma}_{3}}\begin{pmatrix}0&iu_{x}\\
-\bar{u}_{xx}-i\bar{u}_{x}|u_{x}|^{2}&0\end{pmatrix}.
\end{equation*}
Moreover, $\omega^{(2)}$ satisfies \eqref{ahf}, leading to the derivative relation
\begin{equation*}
\frac{d}{dx}\left(\omega^{(2)} \left(G^{(2)}_{x+}-G^{(2)}_{x-}\right)\right)=\frac{i}{2}z\left[\sigma_{3},\omega^{(2)} \left(G^{(2)}_{x+}-G^{(2)}_{x-}\right)\right]+\widetilde{Q}\,\omega^{(2)} \left(G^{(2)}_{x+}-G^{(2)}_{x-}\right).
\end{equation*}
By integrating both sides of this equation, we derive
\begin{equation}
\frac{d}{dx}\int_{\widetilde{\Lambda}}\omega^{(2)} \left(G^{(2)}_{x+}-G^{(2)}_{x-}\right)dz=I_{1}(x)+I_{2}(x),
\end{equation}
where
\begin{equation*}
\begin{aligned}
&I_{1}(x)=\frac{i}{2}\int_{\widetilde{\Lambda}}\left[\sigma_{3},\omega^{(2)} z\left(G^{(2)}_{x+}-G^{(2)}_{x-}\right)\right]dz,\\
&I_{2}(x)=\int_{\widetilde{\Lambda}}\widetilde{Q}\,\omega^{(2)} \left(G^{(2)}_{x+}-G^{(2)}_{x-}\right)dz.
\end{aligned}
\end{equation*}
The term $I_{1}$ admits a decomposition analogous to \eqref{sgb}, yielding $I_{1}\in L^{2}(\mathbb{R})$  under the condition that $z(G^{(2)}_{\pm}-I)\in L^{2}(\partial \widetilde{\Omega}^{\pm})$. Analysis of \eqref{sgb} establishes $\widetilde{Q}\in L^{\infty}(\mathbb{R})$, which consequently implies $I_{2}\in L^{2}(\mathbb{R})$. These results collectively demonstrate that $h_{x}\in L^{2}(\mathbb{R})$.
\end{proof}

\subsection{Proof of Theorem \ref{thm}}\label{sub43}

We are now ready to prove  Theorem \ref{thm} as follows.

\begin{proof}
Given initial data $u_{0}\in H^{3}(\mathbb{R})\cap H^{2,1}(\mathbb{R})$, Proposition \ref{timep} implies that $G^{(2)}_{\pm}(t,\cdot)-I \in H^{1}(\partial\widetilde{\Omega}^{\pm})\cap L^{2,1}(\partial\widetilde{\Omega}^{\pm})$ and $z^{-1}(G^{(2)}_{\pm}(t,\cdot)-I ) \in H^{1}(\partial\widetilde{\Omega}^{\pm})$ hold for all $t\in [-T,T]$ with $T>0$. Moreover, Proposition \ref{con} and the solution verification process in \cite{Chengglo} guarantee that the reconstructed potential satisfies \eqref{fl}. Furthermore, the inverse scattering estimate in Proposition \ref{refa} demonstrates that this potential $u$ belongs to $H^{3}(\mathbb{R})\cap H^{2,1}(\mathbb{R})$. Finally, the Lipschitz continuity of the solution map follows from Proposition \ref{timep}, \eqref{lpxz} and the proofs of Proposition \ref{refa}.
\end{proof}

\section*{Acknowledgements}
\addcontentsline{toc}{section}{Acknowledgements}
This work is supported by the National Natural Science Foundation of China   {(Grant No. 12271104, 1240011471)}.

\section*{References}
\addcontentsline{toc}{section}{References}

\end{document}